\DeclareMathOperator{\Tr}{Tr}
\DeclareMathOperator{\zech}{Zech}
\renewcommand{\phi}{\varphi}
\newcommand\Weil[2]{{\rm W}_{#1,#2}}
\newcommand{\F}{{\mathbb F}}
\newcommand{\C}{{\mathbb C}}
\newcommand{\Q}{{\mathbb Q}}
\newcommand{\Z}{{\mathbb Z}}
\newcommand{\Fp}{\F_p}
\newcommand{\Ft}{\F_2}
\newcommand{\tfr}[1]{{\widehat{#1}}}
\newcommand{\tfrb}[1]{\widehat{#1}_{b}}
\newcommand{\card}[1]{\left|{#1}\right|}
\newcommand\Gauss[1]{{\tau}_{#1}}
\newcommand\abs[1]{{\vert {#1}\vert}}
\newcommand\val[1]{{\rm V}_{#1}}
\newcommand\valp{{\rm val}_{p}}
\newcommand\grmul[1]{{#1}^{\times}}
\newtheorem{theorem}{Theorem}[section]
\newtheorem{lemma}[theorem]{Lemma}
\newtheorem{proposition}[theorem]{Proposition}
\newtheorem{corollary}[theorem]{Corollary}
\newtheorem{conjecture}[theorem]{Conjecture}
\theoremstyle{definition}
\newtheorem{definition}[theorem]{Definition}
\newtheorem{remark}[theorem]{Remark}
\newtheorem{problem}[theorem]{Problem}
\def\Qz{{\Q}(\zeta_p)}
\def\galz{{\rm Gal}\big(\Qz\big)}
\title{New open problems related to old conjectures by Helleseth}
\author{Daniel J. Katz}
\address{Department of Mathematics, California State University, Northridge, \: United States}
\author{Philippe Langevin}
\address{Institut de Math\'ematiques de Toulon, Universit\'e de Toulon, France}
\date{first version: 29 December 2014; this version: 23 April 2015}
\begin{document}

\begin{abstract} 
Recently, very interesting results have
been obtained concerning the Fourier spectra
of power permutations over a finite field. In
this note we survey the recent ideas of Aubry,
Feng, Katz, and Langevin, and
we pose new open problems related 
to old conjectures proposed by Helleseth 
in the middle of the seventies.
\end{abstract}

\maketitle

\section{Introduction}

Let $L$ be a finite field of characteristic $p$
and  order $q$. One defines the Fourier coefficient
of a mapping $f\colon L \to L$ at a point $a\in L$
as
\[
\tfr f(a) = \sum_{x\in L} \mu( f(x) - ax ),
\]
where $\mu\colon L \to \C$ is the canonical additive character.

Strictly speaking, $\tfr f(a)$  is the Fourier 
coefficient of the complex map $\mu\circ f$ at 
the additive character $\mu_a\colon x\mapsto \mu(a x)$.
The minus sign that appears in the definition
of the Fourier coefficient is not usual, but 
there are several good reasons to adopt it. Above
all, one should note that $\mu_a$ will be an eigenvector
of eigenvalue $\tfr f(a)$ for the operator 
of convolution by $\mu \circ f$ over the mappings from $L$
to $\C$. 

In this paper, we are mainly interested in the
Fourier coefficient of the power mapping $f\colon x\mapsto x^s$
where $s$ is a positive integer. In that case,
the Fourier coefficient is sometimes called a \emph{Weil sum},
and we also use the notation
\[
\Weil Ls(a) = \tfr f(a) = \sum_{x\in L} \mu( x^s - ax ).
\]

In the case  where the exponent $s$ is coprime to $q-1$,
we say that it is an {\it invertible exponent}, and the mapping $f\colon x\mapsto x^s$ is  called a {\it power permutation},
because it is indeed a permutation of $L$.
In this case, the Fourier coefficient at the origin 
is equal to zero:
\[
\Weil Ls(0) = \tfr f(0) = \sum_{x\in L} \mu( x^s )
= \sum_{x\in L} \mu( x ) 
= 0.
\]

A power permutation  $f$, or its exponent $s$, is 
said to be \emph{singular} if there exists an $a\in\grmul L$ 
such that $\tfr f(a) = 0$.  We now present 
the first conjecture  proposed in 1976 by Helleseth \cite{TOR}. 
 
\begin{conjecture}[Helleseth Vanishing Conjecture, 1976]
\label{HVC} 
If $|L|>2$ and $s\equiv 1\pmod{p-1}$, then $s$ is singular.
\end{conjecture}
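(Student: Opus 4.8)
The plan is to wring as much arithmetic as possible out of the hypothesis $s\equiv 1\pmod{p-1}$ and then to play it against the power sums of the values $\Weil Ls(a)$, $a\in\grmul L$. The decisive first step is that $s\equiv 1\pmod{p-1}$ forces every $\Weil Ls(a)$ to be a \emph{rational} integer divisible by $p$. Indeed, for $c\in\grmul{\Fp}$ one has $c^s=c$, so the substitution $x\mapsto cx$ gives $\sum_{x\in L}\mu\big(c(x^s-ax)\big)=\Weil Ls(a)$; since the automorphism of $\Qz$ sending $\zeta_p\mapsto\zeta_p^c$ carries $\Weil Ls(a)$ to the left-hand side, $\Weil Ls(a)$ is fixed by $\galz$ and so lies in $\Z$. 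Reducing modulo the prime $(1-\zeta_p)$ and using $\mu(\cdot)\equiv 1$ gives $\Weil Ls(a)\equiv q\equiv 0$, whence $p\mid\Weil Ls(a)$; refining this modulo $(1-\zeta_p)^2$ — for which one needs $\sum_{x\in L}\Tr(x^s-ax)$ to vanish, true because $x\mapsto x^s$ permutes $L$ and $\sum_{x\in L}x=0$ when $|L|>2$ — even gives $4\mid\Weil Ls(a)$ when $p=2$. Write $\Weil Ls(a)=p\,w_a$ with $w_a\in\Z$.

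Next I would sum over $a\in L$. Collapsing the inner character sum gives $\sum_{a\in L}\Weil Ls(a)=q$, and since the exponent is invertible $\Weil Ls(0)=0$, so $\sum_{a\in\grmul L}w_a=q/p$. Likewise $\sum_{a\in L}\Weil Ls(a)^2=q\sum_{x\in L}\mu\big(x^s+(-x)^s\big)$, and because $p=2$ or else $s$ is odd (as $p-1$ is even), the quantity $x^s+(-x)^s$ vanishes identically, so $\sum_{a\in\grmul L}w_a^2=(q/p)^2$. The two identities say $\sum_{a\in\grmul L}w_a^2=\big(\sum_{a\in\grmul L}w_a\big)^2$, i.e.\ $\sum_{a\ne b}w_aw_b=0$, a constraint among $q-1$ integers.

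At this point the conjecture would follow from either of two further inputs. Were all $\Weil Ls(a)\ge 0$, then $\sum_{a\in\grmul L}w_a=q/p$ would leave at most $q/p<q-1$ of the $w_a$ positive, forcing $w_a=0$ for some $a$; and the relation $\sum w_a^2=(\sum w_a)^2$ together with $\sum w_a=q/p$ already rules out all $w_a\ne 0$ once $[L:\Fp]$ is small — in particular for $[L:\Fp]\le 2$. The obstruction in general is precisely what these remarks reveal: $\Weil Ls(a)$ can be negative and $p$-adically large, and the constraints above are met by spurious spectra of $q-1$ nonzero integers. Overcoming this appears to require the Gauss-sum expansion
\[
\Weil Ls(a)=\frac{q}{q-1}+\frac{1}{q-1}\sum_{\chi\ne 1}\Gauss{\chi}\,\Gauss{\chi^{-s}}\,\chi^{s}(-a),
\]
where $\chi$ runs over the nontrivial multiplicative characters of $L$ and $\Gauss{\chi}=\sum_{x\in\grmul L}\chi(x)\mu(x)$, together with Stickelberger's congruence, which fixes the $p$-adic valuations of the $\Gauss\chi$ and thereby dictates which $w_a$ must be small; matching that against the power sums (and, where needed, against higher moments) is what closes the argument in the cases settled by Aubry, Feng, Katz, and Langevin. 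Doing this uniformly in $L$ and $s$ is the crux, and is why Conjecture~\ref{HVC} remains open.
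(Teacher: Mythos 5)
You are attempting to prove the Helleseth Vanishing Conjecture (Conjecture \ref{HVC}), which the paper states precisely as an \emph{open conjecture}: there is no proof in the paper to compare against, and your own writeup concedes in its last sentence that the argument does not close. So this is not a proof, though your preliminary reductions are sound and match what the paper records. The hypothesis $s\equiv 1\pmod{p-1}$ does force $\Weil Ls(a)\in\Z$ (the paper's algebraic-degree lemma, via the Galois action $\zeta_p\mapsto\zeta_p^{c}$), the congruence modulo $\wp=(1-\zeta_p)$ then gives $p\mid \Weil Ls(a)$ as in \eqref{Peter}, and your first and second power moments $\sum_{a\in L}\Weil Ls(a)=q$, $\sum_{a\in L}\Weil Ls(a)^2=q^2$ are correct, yielding $\sum_{a\in\grmul L}w_a=q/p$ and $\sum_{a\in\grmul L}w_a^2=(q/p)^2$.

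The gap is everything after that, and both escape routes you gesture at fail. Nonnegativity of the $\Weil Ls(a)$ is simply false in general: Lemma \ref{QUADRA} exhibits the value $-\abs{K}$, and in every three-valued spectrum the two nonzero values $A$ and $B$ have opposite signs (this is proved in the course of Theorem \ref{UNIFORM}). The Gauss-sum expansion with Stickelberger's theorem controls only the minimum $p$-adic valuation $\val L(s)$, i.e.\ \emph{how divisible} the least divisible coefficient is; it cannot by itself force any coefficient to equal zero, and nobody has made it do so. You also overstate what Aubry, Feng, Katz, and Langevin have settled: Aubry--Langevin prove that some coefficient is divisible by $3$ (not that it vanishes), Katz's vanishing theorem (Theorem \ref{Katherine}) applies only to three-valued exponents, and the genuine vanishing results for $s=q-2$ in characteristics $2$ and $3$ come from elliptic curves and Kloosterman sums, outside the moment/Stickelberger framework you describe. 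What you actually have is a correct pair of constraints on the integers $w_a$ that, as you note, rule out a nowhere-vanishing spectrum only in trivially small degrees; the conjecture itself remains open.
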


The conjecture was based on numerical evidence. The Fourier 
spectra of all the exponents over the extensions of $\F_2$ with 
degree less than or equal to $25$ have been computed in 2007 by Langevin 
\cite{NPP}, and no counterexample was found. Up to now, very little in the way of partial results has been obtained for this conjecture.  This question has every appearance 
of difficulty.
Let us consider the special case when the exponent $s=q-2$ satisfies Conjecture \ref{HVC}.
Then $s=q-2 \equiv -1 \pmod{p-1}$, so the congruential hypothesis of Conjecture \ref{HVC} is satisfied for $s=q-2$ if and only if $-1\equiv 1 \pmod{p-1}$, which is true if and only if $p=2$ or $p=3$.
For such an exponent, the Fourier coefficient 
is one plus a Kloosterman sum 
$$
\Weil L{q-2} ( a ) = 1 + \sum_{x \in\grmul L} \mu\left( \frac 1x - ax \right).
$$
If $L$ is of characteristic $2$, one can use the theory of elliptic curves as in \cite{LW} to prove that $\Weil L{q-2}(a)$ assumes all integer values
divisible by $4$ in the range $[1-2\sqrt q, 1+2\sqrt q]$ as $a$ runs through $\grmul L$,
a consequence of Deuring's work. In particular, the
Fourier coefficient $\Weil L{q-2}(a)$ takes the value $0$ for some $a\not=0$. 
In characteristic $3$, it was shown (see \cite{KL}) that $\Weil L{q-2}(a)$ assumes all integer values divisible by $3$ in the range $[1-2\sqrt q, 1+2\sqrt q]$ as $a$ runs through $\grmul L$.
\begin{problem}
Prove the Helleseth Vanishing Conjecture for $p\in\{2,3\}$ and
$s=q-2$ without using of the theory of elliptic curves. 
\end{problem}

\begin{remark}
In characteristic $p>3$ the exponent $-1$, i.e., the
exponent $q-2$, is not singular. Indeed, for such fields $\Weil L{q-2}(a)$ has been shown to be nonzero when $a\not=0$ in \cite{KRV}.
\end{remark}

An exponent $s$ is said to be {\it $r$-valued} if the number of distinct 
Fourier coefficients on $\grmul L$ is $r$. The following theorem is
a consequence of recent results of Feng and Katz.
\begin{theorem}\label{Thomas} If $[L:\Ft]$ is a power of
two, then an invertible exponent is not three-valued.
\end{theorem}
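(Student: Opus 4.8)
The plan is to argue by contradiction. Suppose $q=|L|=2^n$ with $n$ a power of two, and that some invertible exponent $s$ is three-valued on $\grmul L$: say $f(x)=x^s$ has $\tfr f(a)$ taking the values $V_1,V_2,V_3$ with multiplicities $n_1,n_2,n_3$ as $a$ runs over $\grmul L$. The first step is to record the low-order power moments. Summing over all $a\in L$ and using orthogonality of additive characters, exactly as in the computation of $\Weil Ls(0)$ in the introduction, gives $\sum_{a\in L}\tfr f(a)=q$ and $\sum_{a\in L}\tfr f(a)^2=q\sum_{x\in L}\mu\bigl(x^s+(-x)^s\bigr)$; in characteristic two $(-x)^s=x^s$, so the inner exponent vanishes identically and the second moment is $q^2$. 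Since $\tfr f(0)=0$, restricting to $\grmul L$ yields the system $\sum_i n_i=q-1$, $\sum_i n_iV_i=q$, $\sum_i n_iV_i^2=q^2$. As the $V_i$ are distinct, this expresses each multiplicity as a rational function of the three values, namely $n_i=\bigl[(q-V_j)(q-V_k)+(q-2)V_jV_k\bigr]\big/\bigl[(V_i-V_j)(V_i-V_k)\bigr]$ for $\{i,j,k\}=\{1,2,3\}$; requiring these to be non-negative integers summing to $q-1$ already limits the value triples sharply.

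The decisive step is $2$-adic divisibility, and here the recent results of Feng and Katz enter. Every Weil sum in characteristic two is even, since $\tfr f(a)=q-2\,\card{\{x:\Tr(x^s-ax)=1\}}$; but when $[L:\Ft]$ is a power of two one has much sharper control on the $2$-adic valuations that can occur in the spectrum (a Stickelberger/Ax--Katz-type phenomenon, sharpened in that work, with a code-theoretic incarnation via McEliece's theorem on the $2$-divisibility of the weights of the two-zero cyclic code of length $2^n-1$ attached to the exponents $\{1,s\}$). The plan is to play this bound against the moment system: Katz's structural restrictions pin down the admissible value triples for a hypothetical three-valued spectrum, and for each of them the equations $\sum n_iV_i=2^n$ and $\sum n_iV_i^2=2^{2n}$ (together with $\sum n_i=2^n-1$) force strong $2$-divisibility of the values $V_i$ — the kind of divisibility exhibited by the classical Gold- and Kasami-type three-valued families, whose defining congruence conditions on $n$ are incompatible with $n$ being a power of two. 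The contradiction is that this much divisibility cannot coexist with the valuation bound available precisely in the power-of-two case. Should the quadratic moment be insufficient, one adjoins the integrality of the fourth moment, $\tfrac1q\sum_{a\in L}\tfr f(a)^4=\sum_{x+y+z+w=0}\mu\bigl(x^s+y^s+z^s+w^s\bigr)\in\Z$, as an extra constraint.

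The main obstacle is exactly this divisibility step. The purely combinatorial part — the moment identities and the non-negativity of multiplicities — does admit spurious integer solutions; indeed the genuine three-valued spectra that exist over fields of non-power-of-two degree satisfy it, so no contradiction can come from counting alone. The work lies in turning the hypothesis ``three-valued'' into a concrete lower bound on $\min_{a\in\grmul L}\valp\bigl(\tfr f(a)\bigr)$ that outruns what the characteristic-two, power-of-two-degree valuation theorem allows, and then checking that no admissible value triple survives; this reconciliation is the crux, and it is precisely what one imports from the theorems of Feng and Katz.
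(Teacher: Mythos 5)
Your proposal is an outline rather than a proof: it correctly identifies the general shape of the argument (power moments of the spectrum, $2$-adic divisibility, and a valuation bound special to power-of-two degree), but the decisive steps are deferred to ``the theorems of Feng and Katz'' without stating what those theorems say or how they combine, and you acknowledge as much in your final paragraph. Concretely, three things are missing. First, the structural fact that a three-valued spectrum must contain $0$ --- i.e.\ the spectrum is $\{0,A,B\}$ with $A,B\in\Z$ of opposite sign --- is never established or even used; your three values $V_1,V_2,V_3$ stay generic throughout. This vanishing is precisely Katz's contribution (Theorem \ref{Katherine}), and it is the hypothesis under which Feng's argument applies; without it the whole strategy does not get off the ground. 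Second, the two halves of the actual contradiction are never made explicit: on one side the upper bound $2\,\val_L(s)\le [L:\F_2]$ when the degree is a power of two (Calderbank--McGuire--Poonen--Rubinstein, via Lemma \ref{QUADRA} and \eqref{Nestor}), and on the other side the dichotomy of Theorem \ref{UNIFORM}, whose case \eqref{Alice} violates that bound and whose case \eqref{Bob} is killed by $2\mid N(1,1)$ together with $V=A+B-AB/q$ forcing $2q\mid AB$. Your fourth-moment remark gestures at the right object (it is how $|\alpha\beta\gamma|\le -AB/q$ is derived), but you never extract a usable inequality from it.

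Third, the proposed endgame is not a valid argument: you suggest that the forced divisibility is ``the kind exhibited by the classical Gold- and Kasami-type three-valued families, whose defining congruence conditions on $n$ are incompatible with $n$ being a power of two.'' The theorem must exclude \emph{every} hypothetical three-valued spectrum, not merely the known families, so matching against Gold/Kasami congruences proves nothing. The genuine three-valued spectra over degrees that are not powers of two do satisfy all of your moment identities and integrality constraints, so --- as you yourself note --- no contradiction can come from the combinatorial part; everything rests on the divisibility confrontation, and that is exactly the part left unproved. In short, the proposal reproduces the paper's high-level plan (the paper itself obtains Theorem \ref{Thomas} by citing Feng for the case where some Fourier coefficient vanishes and Katz for the fact that a three-valued exponent is always singular, with a self-contained version given via Theorem \ref{UNIFORM} and Corollary \ref{Richard}) but omits the content that makes it work.
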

Feng \cite{FENG} proved the above theorem assuming that 
at least one of the Fourier coefficients is zero on $\grmul L$.
Katz \cite{DKC} proved that this vanishing always 
occurs when the exponent $s$ is three-valued.
Katz's methods work in arbitrary characteristic, but Feng's work was specific to characteristic $2$, so that Theorem \ref{Thomas} only concerns fields of characteristic $2$.
More recently, Katz \cite{DKDIV} has shown that one can replace $\Ft$ by $\F_3$ in Theorem \ref{Thomas}.
The proof of Theorem \ref{Thomas} and its analogue in characteristic $3$ verify parts of a conjecture by Helleseth, who proposed that the result should hold in arbitrary characteristic.
\begin{conjecture}[Helleseth Three-Valued Conjecture, 1976] 
\label{HSC}
For any prime $p$, if $[L:\Fp]$ is a power of
two, then the spectrum of an invertible exponent
is not three-valued.
\end{conjecture}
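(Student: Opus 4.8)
By the theorem of Katz quoted above \cite{DKC}, which holds in every characteristic, a three-valued invertible exponent always has $0$ among its Fourier values. It therefore suffices to show that there is no invertible exponent $s$ over a field $L$ with $q=|L|$ and $[L:\Fp]$ a power of $2$ whose spectrum on $\grmul L$ is $\{0,A,B\}$ with $A,B$ nonzero. The nonzero values are algebraic integers in $\Qz$, and a short argument with $\galz$ shows that either both lie in $\Z$, or else $\galz$ interchanges $A$ and $B$ and forces $n_A=n_B$ (writing $n_0,n_A,n_B$ for the multiplicities on $\grmul L$); the latter case should be treated separately.

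The first ingredient is the classical system of power moments of the Weil sum of a power permutation:
\begin{align*}
n_0+n_A+n_B&=q-1,\\
n_A A+n_B B&=q,\\
n_A A^2+n_B B^2&=q^2.
\end{align*}
Solving the last two for $n_A$ and $n_B$ and demanding that they be positive integers is already a strong Diophantine restriction on $(A,B,q)$; in particular $|A|,|B|\le q$. The third moment identity $\sum_{a\in L}\Weil Ls(a)^{3}=q\sum_{x+y+z=0}\mu(x^{s}+y^{s}+z^{s})$ gives one further relation tying $A$ and $B$ to the arithmetic of $s$.

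The second, decisive, ingredient is $p$-adic divisibility. Expanding $\Weil Ls(a)$ over the multiplicative characters of $L$ and applying Stickelberger's theorem (the $p$-adic valuation of the Gauss sum attached to $\chi^{t}$ equals $w_p(t)/(p-1)$, where $w_p$ denotes the base-$p$ digit sum) bounds $\valp(\Weil Ls(a))$ below by essentially the minimum over $t\not\equiv 0$ of $w_p(t)+w_p(ts)$, digits taken of the reductions modulo $q-1$, divided by $p-1$. When $[L:\Fp]$ is a power of $2$ this minimum is large --- this is precisely the content of the divisibility results underlying Theorem~\ref{Thomas} and its characteristic-$3$ analogue --- so one expects $\valp(A)$ and $\valp(B)$ to be close to $[L:\Fp]/2$ and, after a finer analysis, $A\equiv B \pmod{p^{m}}$ for large $m$. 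Together with $|A|,|B|\le q$ this confines $(A,B)$ to a short list of shapes $\pm p^{j},\pm p^{\ell}$ with $j,\ell$ near $[L:\Fp]/2$, and each candidate is then eliminated by feeding it back into the moment equations and the cubic identity, just as Feng did for $p=2$ and Katz for $p=3$.

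The main obstacle is the divisibility step for $p>3$. Feng's original argument was specific to characteristic $2$, where the near-classification of three-valued crosscorrelation functions and $\gcd$ constraints incompatible with a power-of-two degree carry much of the weight, and Katz's machinery had to be pushed hard to reach $p=3$; obtaining divisibility bounds strong enough to pin down $A$ and $B$ for every $p$ --- in effect, uniform control of the carries in the base-$p$ digit sums of $\{ts\bmod(q-1)\}$ when $[L:\Fp]$ is a power of $2$ --- seems to require a genuinely new idea. An alternative worth exploring is a descent along the tower $\Fp\subset\cdots\subset K\subset L$ with $[L:K]=2$: relate the spectrum over $L$ to character sums over $K$ via the Hasse--Davenport relation and induct on the $2$-power degree. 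The snag there is that invertibility of $s$ modulo $q-1$ need not descend to $|K|-1$, so one is forced to analyse the induced map over $K$, which may fail to be a permutation.
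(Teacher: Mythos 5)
This statement is not a theorem of the paper: it is a conjecture that the paper explicitly states ``remains open for $p>3$,'' and nothing you have written changes that. Your text is a plan of attack, not a proof, and you concede as much when you say the divisibility step for $p>3$ ``seems to require a genuinely new idea.'' The concrete gap is this: all known arguments reduce to the dichotomy of Theorem \ref{UNIFORM}. Case \eqref{Alice} ($a,b>\tfrac12[L:\F_p]$) is killed for every $p$ by the Stickelberger upper bound $2\val L(s)\leq[L:\F_p]$ (Remark \ref{Harold}, via \eqref{Nestor} and Lemma \ref{QUADRA}). Case \eqref{Bob} is killed only for $p\in\{2,3\}$, and only because there $p$ divides $N(1,1)$, which through \eqref{NONE} forces $pq\mid AB$ and contradicts $a=b=\tfrac12[L:\F_p]$. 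No analogue of the divisibility $p\mid N(1,1)$ is known for $p>3$, and your plan supplies no substitute for it. Your claim that the surviving candidates $(A,B)$ ``are then eliminated by feeding them back into the moment equations and the cubic identity'' is false as stated: the moment equations and the third-moment identity admit consistent integer solutions of the shape arising in case \eqref{Bob} (e.g.\ $A=\alpha\sqrt q$, $B=\beta\sqrt q$ with $\alpha\beta<0$ and $|\alpha|+|\beta|$ a power of $p$), so they cannot by themselves finish the argument.

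Two further corrections to the machinery you describe. First, Theorem \ref{Katherine} already asserts that the three values $0,A,B$ lie in $\Z$, so your preliminary case split in which $\galz$ interchanges $A$ and $B$ never occurs. Second, your Stickelberger step runs in the wrong direction: what Stickelberger and Hasse--Davenport give when $[L:\F_p]$ is a power of two and $s\not\approx 1$ is an \emph{upper} bound on the minimal valuation, $2\val L(s)\leq[L:\F_p]$, not a lower bound pushing $\valp(A)$ and $\valp(B)$ up to $[L:\F_p]/2$. The lower bound $a,b\geq\tfrac12[L:\F_p]$ in the paper comes from an entirely different source: Parseval applied to the differential multiplicities $N(u,v)$, i.e.\ the fourth power moment of the spectrum, which yields $|\alpha\beta\gamma|\leq -AB/q$ and hence the dichotomy of Theorem \ref{UNIFORM}. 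If you want to pursue this problem, the paper's own suggestion is the route through differential uniformity: Conjecture \ref{NC} (every nice exponent $s\not\approx 1$ in odd characteristic has $2$ as a differential multiplicity) would rule out case \eqref{Bob} for all $p>3$, because $|\alpha\beta|=2$ forces $|A-B|=3\sqrt{|L|}$, contradicting $|\gamma|=1$. Your descent idea along a tower of quadratic extensions is partially realized in the paper at the level of valuations (inequality \eqref{Nestor} and Lemma \ref{QUADRA}), but, as you note, it does not propagate the full spectrum.
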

This conjecture remains open for $p>3$.

In this paper, we survey old and recent results 
dealing with Fourier coefficients of power 
mappings to conclude with a very interesting 
open question in the theory of finite fields.

\section{Fourier Coefficients and Convolution}

The \emph{Fourier coefficient} at a point $a\in L$ of 
a complex function $F$ defined over $L$ is
\[
\tfr F(a) = \sum_{x\in L} F(x) \bar\mu( ax ).
\]
\begin{remark} 
$\tfr F(a)$ is a scalar product. The additive characters 
of $L$ form an orthogonal basis of the complex mappings 
on domain $L$.
\end{remark}
If $f\colon L \to L$, then we write $\tfr f(a)$ in place of $\tfr{\mu\circ f}(a)$ by a common abuse of notation.
As mentioned in the Introduction, the Weil sum $\Weil Ls(a)$ 
is nothing but the Fourier coefficient $\tfr f(a)$ of
$\mu\circ f$ when $f(x)=x^s$.

If $f \colon L \to L$ and $b \in L$, then we write $f_b$ for the function $b f$, that is, $f_b(x)=b f(x)$ for $x \in L$.
In certain applications it is important to understand the Fourier transform not only of $f$, but of all scalar multiples $f_b$ of $f$.
Power permutations are special in this regard: if $f(x)=x^s$ with $\gcd(s,q-1)=1$ and $b \in \grmul L$, then
\begin{equation}
\label{INVMUL}
\tfrb f(a) 
= \sum_{x\in L} \mu(b x^s - a x)
= \sum_{x\in L} \mu((b^{1/s} x)^s - a b^{-1/s} (b^{1/s} x))
= \tfr f(ab^{-1/s}),
\end{equation}
where $1/s$ is interpreted modulo $q-1$.
Also, when $f(x)=x^s$ is a power permutation, $\tfrb{f}(a)$ is always a real number:
\begin{equation}\label{Larry}
\overline{\tfrb f(a)} = \sum_{x\in L} \mu(-(b x^s - a x))= \sum_{x\in L} \mu(b (-x)^s - a (-x))=\tfrb f(a),
\end{equation}
since the condition $\gcd(s,q-1)=1$ makes $s$ odd when the characteristic of the field is odd.

Fourier coefficients satisfy general rules, namely, the
inversion formula,
\[
\sum_{a\in L} \tfr F(a) \mu(ax) = q F(x),
\]
or more generally the Poisson formula over an additive subgroup $S$ of $L$,
\[
\sum_{a\bot S} \tfr F(a) \mu(ax) = \frac q{\card S} \sum_{s\in S} F(x+s),
\]
where $S$ is considered a subspace of the $\F_p$-linear space $L$ equipped with inner product $(x,y) \mapsto \Tr(x y)$, with $\Tr\colon L \to \F_p$ the absolute trace.
There is also the Parseval-Plancherel identity,
\[
\sum_{a\in L} \abs{\tfr F(a)}^2 = q \sum_{x\in L} \abs{F(x)}^2. 
\]
In this context, one introduces the convolutional product
of two complex mappings $F$ and $G$ at $z\in L$,
\[
F \ast G ( z ) = \sum_{x+y=z} F(x) G(y).
\]
The $\C$-algebra of complex maps equipped with
this product is usually denoted by $\C[L]$.
For $b\in L$, we denote by $\delta_b$ the {\it Dirac function}
\[
\delta_b(x) = \begin{cases}
1 & \text{if $x=b$}, \\
0 & \text{otherwise},
\end{cases}
\]
and we refer to the set of functions $\{\delta_b\}_{b \in L}$ as the {\it Dirac basis}, since it is a basis of the $\C$-vector space $\C[L]$.
Then $\C[L]$ has $\delta_0$ for its unit element. The $k$th power
of convolution is  
\[
F^{[k]}(z) = \sum_{x_1+\cdots+x_k = z} F(x_1) F(x_2)\cdots F(x_k),
\]
and one has the well known trivialization formulas
\begin{align}
\label{TRIV}
\tfr{ F\ast G} (a) & = \tfr F (a) \tfr G(a), \\
q \tfr{ FG} (a) & = \tfr F \ast \tfr G(a).\nonumber
\end{align}
\begin{remark}
The inversion formula and trivialization formulas
show that the Fourier transform is
a $\C$-algebra isomorphism from $\C[L]$ into $\C^L$ with
$$
   \tfr {\delta_0} = 1,\quad \tfr 1 = q\delta_0.
$$ 
\end{remark}
In the Dirac 
basis $\{\delta_b\}_{b\in L}$,

\[
\delta_b \ast F (t)= \sum_{y+x = t} \delta_b(y) F(x) = F(t-b),
\]
whence
\[
	\delta_b \ast F = \sum_{t} F(t-b) \delta_t,
\]
and so, since $\{\mu_a\}_{a \in L}$ is an eigenbasis for convolution by $F$ in the $\C$-linear space $\C[L]$, with $\tfr F(a)$ the eigenvalue for $\mu_a$, we have
\[
    \prod_{a\in L} \tfr F(a) = \det [ F( a - b) ]_{a,b\in L}.
\]
Then the Helleseth Vanishing Conjecture (Conjecture \ref{HVC}) is equivalent to the following claim.
\begin{conjecture}[Helleseth's 1976 Vanishing Conjecture, restated]
Let $s\equiv 1\pmod{p-1}$ be an integer coprime to
$q-1$. The rank of the matrix $[\mu\big((x-y)^s\big)]_{y,x \in L}$
is less than $q-1$.
\end{conjecture}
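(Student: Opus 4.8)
The first move is to read the rank off the spectrum. Writing $F(t)=\mu(t^s)$, the matrix $[\mu((x-y)^s)]_{y,x\in L}=[F(x-y)]_{y,x\in L}$ is the additive group matrix of $L$ attached to $F$; since $\{\mu_a\}_{a\in L}$ is an eigenbasis for convolution by $F$ with eigenvalues $\tfr F(a)=\Weil Ls(a)$, the matrix is diagonalizable and its rank equals the number of nonzero eigenvalues:
\[
\operatorname{rank}[\mu((x-y)^s)]_{y,x\in L}=\card{\{a\in L:\Weil Ls(a)\neq 0\}}.
\]
Because $\gcd(s,q-1)=1$ forces $\Weil Ls(0)=0$, one eigenvalue is automatically zero, giving $\operatorname{rank}\le q-1$ for free. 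The whole content of the strict inequality $\operatorname{rank}<q-1$ is therefore the existence of a \emph{second} zero, i.e.\ of some $a\in\grmul L$ with $\Weil Ls(a)=0$; this is exactly the singularity of $s$ asserted by Conjecture \ref{HVC}.

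To manufacture that second zero I would run the $p$-adic power-moment method. Put $P_m=\sum_{a\in\grmul L}\Weil Ls(a)^m$. Inversion and Parseval give the opening data $P_1=q$ and $P_2=q^2$ (the Weil sums being real by \eqref{Larry}), and for every $m$ a short computation---expand the $m$th power and sum the resulting $\sum_{a\in\grmul L}\mu(-a(x_1+\cdots+x_m))$---collapses $P_m$ to
\[
P_m=q\sum_{x_1+\cdots+x_m=0}\mu(x_1^s+\cdots+x_m^s),
\]
an additive character sum over a hyperplane that is Galois-invariant, hence a rational integer whose $p$-adic valuation one can bound. The hypothesis $s\equiv 1\pmod{p-1}$ is meant to enter through divisibility: modulo $\pi=1-\zeta_p$ one has $\Weil Ls(a)\equiv q\equiv 0$, and refined Stickelberger/Ax-type congruences of the kind developed by Katz \cite{DKC}, \cite{DKDIV} should, under this congruence on $s$ (equivalently, on the base-$p$ digit sum of $s$), raise $\valp(\Weil Ls(a))$ to a uniform value $\delta$ strictly above the generic floor. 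If no $\Weil Ls(a)$ vanished on $\grmul L$, then all $q-1$ of these nonzero algebraic integers would carry that divisibility, forcing $\valp(P_m)\ge m\delta$; confronting this with the valuation read from the explicit hyperplane sum above is what should be impossible for suitable $m$. Any such argument \emph{must} use the congruence essentially: the Remark above shows that for $s=q-2$ with $p>3$---which fails $s\equiv 1\pmod{p-1}$---the exponent is genuinely non-singular, so the boost to $\delta$ cannot come from generic divisibility alone.

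The main obstacle is precisely this quantitative clash in the last step, and it is where the conjecture stands open. For $p\in\{2,3\}$ the real subfield of $\Qz$ is $\Q$, so each $\Weil Ls(a)$ is itself a rational integer; the integrality then lets the boosted divisibility and the low moments pin the spectrum down, which---together with the elliptic-curve input of \cite{LW}, \cite{KL} for the Kloosterman exponent $s=q-2$, and with \cite{DKC} for three-valued spectra, where only finitely many values occur and $P_1,P_2$ plus one forced zero close the system---is exactly why these cases are settled. For $p>3$ two difficulties appear at once: the values $\Weil Ls(a)$ need no longer be rational, so one loses the ``integer in a bounded set'' control and must instead handle the entire Galois orbit in the ring of integers of $\Qz$; and the Stickelberger boost to $\delta$ is in general too small for $m\delta$ to outrun $\valp(P_m)$, the more so as $\valp$ of the hyperplane sum itself grows with $m$ and the number of distinct Fourier values is not bounded a priori. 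A complete proof would thus require either a sharper divisibility estimate for $\Weil Ls(a)$ under $s\equiv 1\pmod{p-1}$, or a new family of moment identities whose $p$-adic valuations can be computed exactly, strong enough to exclude every zero-free value distribution.
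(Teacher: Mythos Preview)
This restated conjecture is not proved in the paper; it is presented as an equivalent reformulation of the open Helleseth Vanishing Conjecture (Conjecture~\ref{HVC}). The only argument the paper supplies is the equivalence itself: since $\{\mu_a\}_{a\in L}$ diagonalizes convolution by $F(t)=\mu(t^s)$ with eigenvalues $\Weil Ls(a)$, the rank of $[\mu((x-y)^s)]_{y,x\in L}$ equals $\card{\{a\in L:\Weil Ls(a)\neq 0\}}$, and $\Weil Ls(0)=0$ already gives rank $\le q-1$, so rank $<q-1$ is exactly the existence of a vanishing Weil sum on $\grmul L$. Your first paragraph reproduces this equivalence correctly and by the same route; that is all the paper claims here.

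The remainder of your proposal is a speculative strategy toward the open problem, which you yourself flag as incomplete. One concrete slip is worth correcting: you write that for $p>3$ the values $\Weil Ls(a)$ ``need no longer be rational,'' but the Algebraic Degree Lemma in Section~\ref{Joseph} shows that the hypothesis $s\equiv 1\pmod{p-1}$ is precisely the condition forcing $\Weil Ls(a)\in\Z$ for every $a$, in every characteristic. So under the conjecture's own hypothesis the Weil sums are always rational integers, and the distinction you draw between $p\le 3$ and $p>3$ on grounds of rationality is spurious. The genuine obstruction for large $p$ is not a loss of integrality but the absence of sharp enough $p$-adic or moment constraints to force a zero among the $q-1$ integer values on $\grmul L$.
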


For $f\colon L \to L$, the product
$D(f)= \prod_{a\in\grmul L} \tfr f(a)$ of the Fourier coefficients on $\grmul L$ appears
naturally in two ways. Firstly, by considering the 
convolution by the mapping $x\mapsto \mu\big(f(x)\big) - 1$,
one can show that
$$
	-q D(f) = \det [ \mu\big(f(x-y) \big) - 1]_{x,y\in L}.
$$

Secondly, if $f(x)=x^s$ for an invertible exponent $s$, the number of solutions in $L^n$ of 
of
\begin{equation}
\label{LSYS}
\begin{cases}
f(x_1) + f(x_2) + \ldots + f(x_n) & = 0,\\
\lambda^1 x_1 +
\lambda^2 x_2 + \cdots +
\lambda^n x_n & = 0,
\end{cases}
\end{equation}
where $\lambda$ has order $q-1$ in $\grmul L$,
can be written as  
\begin{align*}
\frac 1{q^{2}}\sum_{a,b} \sum_{x_1,\ldots,x_n} 
 \mu_b\left( \sum_{i=1}^nf(x_i) \right)
 \bar\mu_a \left(\sum_{i=1}^n\lambda^ix_i \right)
&=
\frac 1{q^{2}}\sum_{a,b}
\prod_{i=1}^n \tfrb f(a\lambda^i) \\
& =q^{n-2}
+ 
\left(\frac {q-1}{q^2}\right) \sum_{j=0}^{q-2} D_{j,n}(f),
\end{align*}
where $D_{j,n}(f) = \prod_{k=j}^{j+n-1} \tfr f( \lambda^{k})$, which equals $D(f)$ when $n=q-1$. 
In particular, as remarked by Helleseth \cite{TOR-SETA}, the Vanishing
Conjecture is equivalent to saying that the number of solutions 
of the system \eqref{LSYS} with $n=q-1$ is equal to $q^{q-3}$.
\section{Spectrum of a Power Mapping}\label{Joseph}

Let $F\colon L \to \C$. The set of 
the Fourier coefficients is called the \emph{spectrum} of $F$. The
set of  Fourier coefficients on $\grmul L$ is called the \emph{reduced
spectrum} of $F$. 

The values and the multiplicities of 
the Weil sums $W_{L,s}(a)$ of exponent $s$ do not change if 
we replace $s$ by $ps$ or $1/s$ (modulo $q-1$).
We write  $s^\prime \sim s$  if there exists 
$j$ such that $s^\prime \equiv p^j s\pmod{q-1}$. We
say that the exponents $s'$ and $s$ are {\it equivalent} and write $s^\prime \approx s$ if $s^\prime\sim s$ or  
$s^\prime\sim 1/s$.

Recall from \eqref{INVMUL} that if $f(x)=x^s$ with $\gcd(s,q-1)=1$ and $b\not=0$, then
\[
\tfrb f(a) 
=\tfr f(ab^{-1/s}),
\]
where $1/s$ is interpreted modulo $q-1$.
Thus $f_b=b f$ has the same spectrum (reduced or not) 
as $f$ for all $b\in\grmul L$. Note that a general
power mapping does not satisfy this property, with a simple example being $x\mapsto x^{q-1}$ when $q>2$. For
future work, it could be important to know more about 
the mappings satisfying this  spectrum invariance property. 

\begin{problem}[invariance]
Find necessary and sufficient conditions for a map (or permutation) $f \colon L\rightarrow L$ to have the property that the spectrum of $f$ is equal to the spectrum of $b f$ for all $b\in\grmul L$.
\end{problem}

Let $\zeta_p = \exp( 2i\pi/p)$. Then let
$\wp=(1-\zeta_p)$, the prime ideal above  $p$ in $\Z[ \zeta_p ]$.
For a power permutation 
$f$, the $\wp$-divisibility of the Fourier coefficient follows
from
\begin{equation}\label{Peter}
	\tfr f(a) \equiv \tfr f(0) \equiv 0 \pmod {\wp},
\end{equation}
since $1=\mu_0(x) \equiv \mu_a(x) \pmod{\wp}$ for every $x \in L$.

Another important fact satisfied by power permutations 
is the invariance of the spectrum under the action of
the Galois group of $\Qz$. Indeed, considering the element
$\phi_r$ in $\galz$ that maps $\zeta_p$ to $\zeta_p^r$,
one has
$$
\phi_r ( \tfr f(a) ) = \sum_{x\in L} \mu( r f(x) - arx ) = \tfr f_r(a r) = \tfr f( a r^{1-1/s} ).
$$

\begin{lemma}[algebraic degree]
The spectrum of a power permutation of exponent
$s$ has all values in $\Z$ if and only if $s\equiv 1\pmod{p-1}$. 
If $d \mid p-1$, then the Fourier coefficients reside in the degree $d$ 
extension of $\Q$ lying within $\Q(\zeta_p)$ if and only if 
$s\equiv 1\pmod{(p-1)/d}$.
\end{lemma}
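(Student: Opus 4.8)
\emph{Proof proposal.}
The plan is to combine the Galois action on the Fourier coefficients recorded just before the lemma with the fact that $\galz \cong (\Z/p\Z)^\times$ is cyclic of order $p-1$. Fix $d\mid p-1$ and let $K$ be the unique degree-$d$ subfield of $\Qz$; it exists and is unique precisely because $\galz$ is cyclic, and it is the fixed field of the unique subgroup $H\le\galz$ of order $(p-1)/d$, namely $H=\{\phi_r : r^{(p-1)/d}\equiv 1\pmod p\}$. Thus an element of $\Qz$ lies in $K$ if and only if it is fixed by every element of $H$. Since each $\tfr f(a)$ is a sum of $p$th roots of unity, hence an algebraic integer of $\Qz$, it lies in $\Q$ if and only if it lies in $\Z$; so the first assertion is exactly the case $d=1$ (where $K=\Q$) of the second, and it suffices to prove that every Fourier coefficient of $f$ lies in $K$ if and only if $s\equiv 1\pmod{(p-1)/d}$. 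Throughout $f(x)=x^s$ with $\gcd(s,q-1)=1$, which forces $\gcd(s,p-1)=1$ because $p-1\mid q-1$, so $s$ and $1/s$ make sense modulo $p-1$, and $s\equiv 1\pmod{(p-1)/d}$ is equivalent to $1-1/s\equiv 0\pmod{(p-1)/d}$.

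For the ``if'' direction, suppose $1-1/s\equiv 0\pmod{(p-1)/d}$. Then for every $\phi_r\in H$ the element $r\in\grmul{\Fp}$ has order dividing $(p-1)/d$, so $r^{1-1/s}=1$, and the formula $\phi_r(\tfr f(a))=\tfr f(ar^{1-1/s})$ gives $\phi_r(\tfr f(a))=\tfr f(a)$ for every $a\in L$. Hence each Fourier coefficient is fixed by $H$ and therefore lies in $K$.

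For the ``only if'' direction, which is the crux, suppose $s\not\equiv 1\pmod{(p-1)/d}$, so that $1-1/s$ does not annihilate the cyclic group $H$; choose $\phi_{r_0}\in H$ with $c:=r_0^{1-1/s}\neq 1$ in $\grmul{\Fp}$. I claim that $\tfr f(ac)=\tfr f(a)$ cannot hold for all $a\in L$. Indeed, writing $F=\mu\circ f$ (so $\tfr F=\tfr f$), if $\tfr F(ac)=\tfr F(a)$ for all $a$, then applying the inversion formula $qF(x)=\sum_a\tfr F(a)\mu(ax)$ and reindexing the sum by $a\mapsto ac$ gives $F(cx)=F(x)$ for all $x\in L$, that is $\mu\bigl((c^s-1)x^s\bigr)=1$ for all $x$; since $x\mapsto x^s$ permutes $L$ this forces $\Tr\bigl((c^s-1)y\bigr)=0$ for all $y\in L$, hence $c^s=1$ by nondegeneracy of the trace form, and then $c=1$ because $\operatorname{ord}c\mid p-1\mid q-1$ makes $\gcd(s,\operatorname{ord}c)=1$, contradicting $c\neq 1$. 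Consequently some $a\in L$ satisfies $\phi_{r_0}(\tfr f(a))=\tfr f(ac)\neq\tfr f(a)$; as $\phi_{r_0}\in H$ fixes $K$ pointwise, this $\tfr f(a)$ does not lie in $K$.

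The bookkeeping with the cyclic group $\galz$ and the reindexing that turns $\tfr F(ac)=\tfr F(a)$ into $F(cx)=F(x)$ are routine. The one real point, and the step I expect to need the most care, is the claim in the previous paragraph: multiplying the argument by a nontrivial scalar $c$ cannot fix every Fourier coefficient of a power permutation. This is exactly where the hypothesis $\gcd(s,q-1)=1$ (used to pass from $c^s=1$ to $c=1$) and the nondegeneracy of the trace pairing enter the argument.
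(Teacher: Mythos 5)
Your proof is correct and follows essentially the same route as the paper's: the Galois action formula $\phi_r\big(\tfr f(a)\big)=\tfr f(ar^{1-1/s})$, Fourier inversion to convert invariance of the spectrum under $a\mapsto ac$ into $\mu\circ f(cx)=\mu\circ f(x)$, and the resulting condition $r^{1-s}=1$, i.e., $(p-1)/d\mid 1-s$. You merely make explicit a few details the paper leaves implicit (the subfield lattice of $\Qz$ and the nondegeneracy-of-trace step passing from $c^s=1$ to $c=1$).
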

\begin{proof}
The first part appears in Helleseth's paper \cite[Theorem 4.2]{TOR}, 
and is a consequence of the second part.
Let $d \mid p-1$, and let $r$ be an element of multiplicative order
$(p-1)/d$ in $\grmul \F_p$.  Then for $f(x)=x^s$, we see that $\phi_r \big(\tfr f(a) \big) = \tfr f(a)$ for all $a \in L$ if and only if $\tfr f(a r^{1-1/s} ) = \tfr f(a)$ for all $a \in L$.  By Fourier inversion, the latter is true if and only if $\mu \circ f( r^{1/s-1} x) = \mu \circ f(x)$ for all $x \in L$, which in turn is true if and only if $\mu ( r^{1-s} x^s ) = \mu ( x^s )$ for all $x \in L$, which happens if and only if $r^{1-s} = 1$, i.e., if and only if $(p-1)/d$ divides $1-s$.
\end{proof}
Assume that $f(x)=x^s$ with $s$ an $r$-valued exponent with values $A_1$, $A_2$, $\ldots$, $A_r$, and denote by $\sigma_i$ the $i$th signed elementary symmetric function of these values, that is,
$$
\sigma_0 = 1,\quad \sigma_1 = -\sum_{i=1}^r A_i,\quad\ldots\quad,\quad \sigma_r = (-1)^r\prod_{i=1}^r A_i.
$$ 
For all $a$ in $L$, we have
\[
           \sum_{i=0}^r \sigma_i \tfr f(a)^{r-i} =  \sigma_r \delta_0(a).
\]
Denote the $n$th convolutional power of $\mu\circ f$ by
$$
f^{[n]}(z) = \sum_{x_1+\ldots+x_n = z} \mu( f(x_1)+\ldots+f(x_n)).
$$
Then for all $z$ in $L$, we have
\[
           q \sum_{i=0}^r \sigma_i(A_1,\ldots,A_r) f^{[r-i]}(z) = \sigma_r(A_1,\ldots,A_r).
\]
In particular, $q$ divides $\prod_{i=1}^r A_i$.  Recall from Conjecture \ref{HVC} that this product is actually conjectured to be $0$ when $s \equiv 1 \pmod{p-1}$.

For any $f\colon L \to L$, the number of solutions in $L^n$ of the system
\begin{align*}
x_1 + x_2 + \cdots + x_n & = u \\
f(x_1) + f(x_2) + \ldots + f(x_n) & = v 
\end{align*}
is given by 
\begin{align*}
N(u,v) &= \frac 1{q^{2}}\sum_{a,b} \sum_{x_1,\ldots,x_n} 
 \mu_b\big( \sum_{i=1}^nf(x_i) - v  \big)
 \bar\mu_a( \sum_{i=1}^n x_i   - u     )\\
&=
\frac 1{q^{2}}\sum_{a,b} \tfrb f(a)^n \mu( au - b v ).
\end{align*}
In \cite{AL}, Aubry and Langevin used
this relation and the little Fermat theorem
to obtain the following  congruence result.
\begin{theorem}[Aubry, Langevin, 2013] 
Let $L$ be a finite field of order $q>2$. 
If $f$ is a power permutation of $L$ of exponent 
$s \equiv 1\pmod{p-1}$, then there is an $a\in \grmul L$ such
that $\tfr f(a) \equiv 0 \pmod 3$.
\end{theorem}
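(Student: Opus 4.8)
The plan is to run everything through the solution-counting identity
\[
N(u,v)=\frac 1{q^{2}}\sum_{a,b}\tfrb f(a)^{n}\mu(au-bv),
\]
valid for every $n\ge 1$ and all $u,v\in L$, exploiting the fact that $s\equiv 1\pmod{p-1}$ makes every $\tfrb f(a)$ a rational integer (the algebraic degree lemma applied to $f$ and, via \eqref{INVMUL}, to each $f_{b}$), so that the little Fermat congruences $m^{3}\equiv m\pmod 3$ — equivalently $m^{2}\equiv 1\pmod 3$ for $3\nmid m$ — apply to all of them. I would argue by contradiction: assume $3\nmid\tfr f(a)$ for every $a\in\grmul L$. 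The case $p=3$ is then already impossible, since \eqref{Peter} gives $3=p\mid\tfr f(a)$ for all $a$; so henceforth $p\ne 3$ and $q$ is a unit modulo $3$.

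First I would do the second-moment computation: taking $n=2$, $u=v=0$ in the identity — equivalently, Parseval--Plancherel together with the reality statement \eqref{Larry} — gives $\sum_{a\in L}\tfr f(a)^{2}=q^{2}$. Since $\tfr f(0)=0$ while each of the $q-1$ remaining squares is $\equiv 1\pmod 3$, this forces $q^{2}\equiv q-1\pmod 3$, hence $q\equiv 2\pmod 3$. Thus the theorem is already proved whenever $q\not\equiv 2\pmod 3$ (in particular whenever $p\equiv 1\pmod 3$, or whenever $[L:\Fp]$ is even), and the whole problem is concentrated in the class $q\equiv 2\pmod 3$, i.e.\ $p\equiv 2\pmod 3$ with $[L:\Fp]$ odd.

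For that remaining case I would bring in the three-variable systems. The solution set of $x_{1}+x_{2}+x_{3}=u$, $f(x_{1})+f(x_{2})+f(x_{3})=v$ carries an $S_{3}$-action with orbit sizes $1$, $3$, $6$, so $N_{3}(u,v)$ is congruent modulo $3$ to the number of diagonal solutions; using $s\equiv 1\pmod{p-1}$ in the form $r^{s}=r$ for $r\in\Fp$ (hence $3^{1-s}=1$ in $L$), this collapses to $N_{3}(u,v)\equiv[\,v=u^{s}\,]\pmod 3$. This has to be confronted with the transform side. Re-summing the identity through $\tfrb f(a)=\tfr f(ab^{-1/s})$ yields, for $u,v\ne 0$,
\[
q^{2}N_{n}(u,v)=q^{n}-\sum_{c\in\grmul L}\tfr f(c)^{n}+\sum_{c\in\grmul L}\tfr f(c)^{n}\,\tfr f(-cw),\qquad w=u(-v)^{-1/s},
\]
where the correlation $\sum_{c}\tfr f(c)^{n}\tfr f(-cw)$ collapses by additive orthogonality to $q^{2}[\,w^{s}=-1\,]$ when $n=1$ and to a Weil sum over a hyperplane when $n\ge 2$. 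Feeding the hypothesis ``$\tfr f(c)^{2}\equiv 1\pmod 3$ for all $c\ne 0$'' into these correlations, and tracking the divisions by $q-1$ and by $q$ (both units mod $3$ when $q\equiv 2$), is designed to contradict $N_{3}(u,v)\equiv[\,v=u^{s}\,]\pmod 3$ at a generic pair $(u,v)$. (An alternative is to pass to $\F_{q^{2}}$, where $q^{2}\equiv 1\pmod 3$, run the second-moment step there, and descend via a Davenport--Hasse relation linking $\Weil{\F_{q^{2}}}s(a)$ to $\Weil Ls(a)$ for $a\in L$; the price is controlling whether $s$ stays a permutation exponent over $\F_{q^{2}}$, i.e.\ the $\Weil{\F_{q^{2}}}s(0)$ contribution.)

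The main obstacle is exactly this case $q\equiv 2\pmod 3$, and the reason it is delicate is that the ``free'' outputs of little Fermat — the second-moment identity and the congruence $N_{3}\equiv N_{1}\pmod 3$ — are, taken by themselves, consistent with the negation of the theorem in this class; one must therefore squeeze out of the counting identity a congruence that is genuinely stronger than a restatement of $m^{2}\equiv 1\pmod 3$. Getting the bookkeeping of the dilated correlations $\sum_{c}\tfr f(c)^{n}\tfr f(-cw)$ right at a nonzero $w$ (or, in the extension-field route, making the Davenport--Hasse descent and the constant-term contribution precise) is where the real work of the proof lies.
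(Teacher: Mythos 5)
Your reductions are correct as far as they go: disposing of $p=3$ via \eqref{Peter} together with integrality, killing $q\equiv 1\pmod 3$ with the second moment $\sum_a \tfr f(a)^2=q^2$, and the dilated-correlation formula $q^{2}N_{n}(u,v)=q^{n}-\sum_{c\in\grmul L}\tfr f(c)^{n}+\sum_{c\in\grmul L}\tfr f(c)^{n}\tfr f(-cw)$ is itself correct. But the proposal stops exactly where the theorem lives: for $q\equiv 2\pmod 3$ you offer only a plan, and the plan points in an unpromising direction. As you yourself observe, the congruence $N_3(u,v)\equiv N_1(u,v)\pmod 3$ follows from $m^3\equiv m$ for \emph{all} integers $m$ (equivalently from the $S_3$ orbit count), so it cannot detect the hypothesis $3\nmid\tfr f(a)$; no bookkeeping of the $n=3$ correlations will extract a contradiction from a hypothesis-free identity. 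The Davenport--Hasse detour has the same problem plus the base-change issues you already flag.

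The missing step is to feed the hypothesis into your own formula with $n=2$ at a \emph{nonzero} frequency pair, and then to combine the resulting congruence with positivity rather than with another congruence. For $u,v\in\grmul L$ the first two terms cancel by Parseval, leaving $q^2 N(u,v)=\sum_{c\in\grmul L}\tfr f(c)^2\,\tfr f(-cw)$; replacing each $\tfr f(c)^2$ by $1$ modulo $3$ gives $q^2N(u,v)\equiv\sum_{c\in\grmul L}\tfr f(c)=q \pmod{3\Z[\zeta_p]}$, hence $N(u,v)\equiv q\equiv 2\pmod 3$ for every $u,v\in\grmul L$ (a congruence of rational integers since $3\Z[\zeta_p]\cap\Z=3\Z$ and $q^2\equiv 1\pmod 3$). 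Now use that $N(u,v)$ is a nonnegative integer: $N(u,v)\equiv 2\pmod 3$ forces $N(u,v)\geq 2$ for each of the $q-1$ values $v\neq 0$, while $N(u,0)=0$ (because $x^s+y^s=0$ forces $x=-y$) and $\sum_{v}N(u,v)=q$. Hence $q\geq 2(q-1)$, i.e.\ $q\leq 2$, contradicting $q>2$. This interplay between a mod-$3$ congruence and the counting constraint $\sum_v N(u,v)=q$ is the one ingredient your sketch never invokes, and it is precisely what the paper's one-line attribution (``this relation and the little Fermat theorem'') is pointing at.
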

\begin{problem}
Is it possible to obtain such a divisibility result
involving another prime $\ell \not=p$?
\end{problem}
The following result \cite[Theorems 1.7, 1.9]{DKC} proved the vanishing 
of a Fourier coefficient on $\grmul L$ that finished the proof of Theorem \ref{Thomas}.
\begin{theorem}[Katz, 2012]\label{Katherine}
If $x^s$ is a three-valued power permutation, then $s$ is singular, $s \equiv 1\pmod{p-1}$, and the spectrum $\{0, A, B\}$ of $x^s$ lies in $\Z$.
\end{theorem}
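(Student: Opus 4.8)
The plan is to reduce Theorem~\ref{Katherine} to two assertions and attack them in turn. Write $f(x)=x^s$ with $\gcd(s,q-1)=1$, let $A_1,A_2,A_3$ be the three values of $\tfr f$ on $\grmul L$, and put $n_i=\card{\{a\in\grmul L:\tfr f(a)=A_i\}}$, so that $n_1+n_2+n_3=q-1$. Evaluating the inversion formula and the Parseval--Plancherel identity at $F=\mu\circ f$, and using $\tfr f(0)=0$ and $f(0)=0$, gives the moment identities $\sum_i n_iA_i=q$ and $\sum_i n_iA_i^2=q^2$. By~\eqref{Larry} each $A_i$ is real; by~\eqref{Peter} each $A_i$ is divisible by $\wp$; and since $\phi_r(\tfr f(a))=\tfr f(ar^{1-1/s})$ for $\phi_r\in\galz$, the group $\galz$ permutes $\{A_1,A_2,A_3\}$, and --- via the bijections $a\mapsto ar^{1-1/s}$ of $\grmul L$ --- the multiplicity $n_i$ is constant on each $\galz$-orbit of values. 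Since the algebraic degree lemma says the spectrum lies in $\Z$ exactly when $s\equiv1\pmod{p-1}$, the theorem follows once we know: (a)~some $A_i$ equals $0$; and (b)~$s\equiv1\pmod{p-1}$.

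I would establish (a) first and then deduce (b) from it. For (a): since $s$ is invertible, expanding $\mu(x^s)$ via Gauss sums lets one write, for $a\in\grmul L$, the Weil sum $\Weil Ls(a)$ as an explicit constant plus $\tfrac1{q-1}\sum_{\chi\neq1}b_\chi\,\bar\chi(a)$, where each $b_\chi$ is a product $\Gauss\chi\,\Gauss\psi$ of two Gauss sums with $\psi$ determined by $\chi$ and $s$; thus the family $\{\Weil Ls(a)\}_{a\in\grmul L}$ is, up to scaling, the finite Fourier transform over $\grmul L$ of the sequence $(b_\chi)$. Stickelberger's theorem gives the $\wp$-adic valuation of each $b_\chi$ from base-$p$ digit sums, and the aim is to show that, once this transform takes only three values, the valuations are forced into an arrangement so rigid that one transform value must vanish. (The same divisibility input can be packaged as a McEliece-type congruence for the weights of the associated cyclic code.) I expect this Stickelberger/Gauss-sum analysis to be the main obstacle: it is the step at which the elementary character-counting above is exhausted and the arithmetic of $\Z[\zeta_p]$ must carry the argument, and it is an instance of exactly the sort of vanishing phenomenon that the Helleseth Vanishing Conjecture shows to be so delicate.

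Granting (a), say $A_3=0$; then $A_1$ and $A_2$ are distinct and nonzero, and I claim $\galz$ fixes each $A_i$, so that $A_i\in\Q\cap\overline{\Z}=\Z$. Being cyclic, the image of $\galz$ in the symmetric group on $\{A_1,A_2,A_3\}$ is cyclic of order $1$, $2$, or $3$, so the orbit structure is $\{1,1,1\}$, $\{2,1\}$, or $\{3\}$. Since $0$ is $\Q$-rational, $\{A_3\}$ is a singleton orbit, which excludes $\{3\}$. If the structure is $\{2,1\}$, then $A_1$ and $A_2$ form the orbit of size $2$: they are $\Q$-conjugate and lie in the unique quadratic subfield $K$ of $\Qz$, and since $A_1$ and $A_2$ are real, $K$ must be real, hence $p\equiv1\pmod4$ and $K=\Q(\sqrt p)$. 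Here $n_1=n_2=:n$, so the moment identities read $n(A_1+A_2)=q$ and $n(A_1^2+A_2^2)=q^2$; thus $A_1+A_2$ is a positive integer dividing $q=p^m$, so $n=p^k$ with $k\ge1$ (for $k=0$ would force $2A_1A_2=(A_1+A_2)^2-(A_1^2+A_2^2)=0$). But then
\[
(A_1-A_2)^2=(A_1+A_2)^2-4A_1A_2=p^{2m-2k}(2p^{k}-1),
\]
whose $\valp$ equals the even integer $2m-2k$ (as $p\nmid 2p^{k}-1$ when $k\ge1$); on the other hand, writing $A_1=u+v\sqrt p$ with $u,v\in\Q$ and $v\neq0$ gives $(A_1-A_2)^2=4v^2p$, of odd $\valp$ --- a contradiction. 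Hence the orbit structure is $\{1,1,1\}$, so $A_1,A_2,A_3\in\Z$, and by the algebraic degree lemma $s\equiv1\pmod{p-1}$. This proves (b), and with (a) it yields all three conclusions of Theorem~\ref{Katherine}.
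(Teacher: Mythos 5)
The paper does not actually prove Theorem~\ref{Katherine}; it cites \cite{DKC} and only remarks that the Galois action on $\Q(\zeta_p)$ is the main ingredient. Judged on its own, your proposal establishes only half of the theorem, and the half it establishes is conditional on the half it does not. Your step (b) --- the Galois-orbit analysis showing that, \emph{once $0$ is known to be a value}, the remaining two values must be rational integers (excluding the orbit type $\{3\}$ because $0$ is rational, and killing the type $\{2,1\}$ by the parity clash between $\valp\bigl((A_1-A_2)^2\bigr)=2m-2k$ and $\valp(4v^2p)$ odd) --- is correct and is genuinely in the spirit of Katz's argument. But step (a), the assertion that some value vanishes, is precisely the content of \cite[Theorem 1.9]{DKC}, it is the hard core of the theorem (it is what upgraded Feng's conditional result to Theorem~\ref{Thomas}), and you do not prove it: you name Stickelberger's theorem and state an ``aim,'' explicitly flagging it as the main obstacle. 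That is a research plan, not a proof. Note moreover that Stickelberger-type input controls only the $\wp$-adic valuation $\val L(s)$ of the Weil sums; no amount of divisibility information can by itself certify that an integer is $0$ rather than merely highly divisible, so some counting or archimedean mechanism must enter, and none is supplied.

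The gap is compounded by an inversion of the logical order used in \cite{DKC}. There, integrality of the spectrum and the congruence $s\equiv 1\pmod{p-1}$ (Theorem 1.7) are proved \emph{first}, without assuming any value vanishes; the vanishing (Theorem 1.9) is then deduced from integrality together with the $\wp$-divisibility \eqref{Peter} (which, for integer values, gives $p\mid A$ and $p\mid B$) and multiplicity formulas of the kind recorded in Lemma~\ref{Laurent}. Your rationality argument, by contrast, uses $A_3=0$ essentially: in the orbit type $\{2,1\}$ you need the third value to drop out of the moment identities to conclude $n\mid q$, and without that the parity argument does not start. So as written, neither (a) nor (b) is established unconditionally, and the proposal does not yield any of the three conclusions of Theorem~\ref{Katherine}. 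To repair it you would either have to supply a genuine proof of the vanishing, or reorganize the argument to prove integrality first (handling the $\{2,1\}$ orbit case with all three values possibly nonzero) and then derive the vanishing from integrality and divisibility, as in \cite{DKC}.
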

The action of the Galois group of the cyclotomic field is the main ingredient of the proof. 
\begin{problem}
Find an analogue of Theorem \ref{Katherine} for four-valued exponents.
\end{problem}
When $f(x)=x^s$ is a three-valued power permutation with values $0$, $A$, and $B$, the number of solutions in $L^2$ of the system
\begin{align*}
  x + y & = 1 \\
  x^s + y^s & = 1 
\end{align*}
is known (e.g., see \cite[Lemma 4.2]{DKDIV}) to be
\begin{equation}
\label{NONE}
	V = N(1,1) = A + B - \frac {AB} q.
\end{equation}
The following relation between the third power moment of the Fourier coefficient and $N(1,1)$ is equivalent to an observation of Blokhuis and Calderbank \cite{Blokhuis-Calderbank} about the weight distribution of certain cyclic codes:
\begin{align}
  \label{Elaine}
  \sum_{a\in L} \tfr f(a)^3
&= \sum_{x,y,z \in L} \mu( x^s+y^s+z^s ) \sum_{a \in L} \mu_a(x+y+z) \\
&= q \sum_{x+y+z=0} \mu( x^s+y^s+z^s ) \nonumber \\
&= q \sum_{x+y=0} \mu( x^s+y^s ) +  q \sum_{z\not=0} \sum_{x+y+z=0} \mu( x^s+y^s+z^s ) \nonumber \\
&= q \sum_{x} \mu(x^s+(-x)^s) + q \sum_{z\not=0} \sum_{X+Y+1=0} \mu( (X^s+Y^s+1) z^s ) \nonumber \\
&= q^2 + q (q-1) V - q (q-V) \nonumber \\
&= q^2 V. \nonumber
\end{align}
In the penultimate equality, we use the fact that the condition $\gcd(s,q-1)$ makes $s$ odd when the characteristic of $L$ is odd (and so $N(1,1)=N(-1,-1)$).

More generally, the product of nonzero
spectral values $A_1$, $A_2$, $\ldots$, $A_n$
appears naturally by Fourier analysis.
Define the signed elementary symmetric functions
$$
\sigma_0 = 1,\quad \sigma_1 = -\sum_{i=1}^n A_i,\quad\ldots\quad,\quad \sigma_n = (-1)^n\prod_{i=1}^n A_i.
$$ 
Let
us consider the polynomial 
$$
P(T) = \prod_{i=1}^n (T-A_i)=\sum_{i=0}^n \sigma_{n-i} T^{i}. 
$$
The rule of trivializations \eqref{TRIV} shows that
$$
	\sum_{i=0}^n \sigma_{n-i} F^{[i]} \ast F = 0.
$$
This means  that 
$\sum_{i=0}^n \sigma_{n-i} F^{[i]}$ 
is in the kernel of convolution by $F$.
Recall that the characters $\{\mu_a : a \in L\}$ form an eigenbasis for convolution by $F$ in the $\C$-linear space $\C[L]$, with $\tfr F(a)$ the eigenvalue for $\mu_a$.
If we let $Z=\{a \in L: \tfr F(a)=0\}$, then $\{\mu_c : c \in Z\}$ is a basis of the nullspace for convolution by $F$, and we can write
$$
\sum_{i=0}^n \sigma_{n-i} F^{[i]}
= \sum_{c\in Z} \lambda_c \mu_c,
$$
for some coefficients $\lambda_c \in \C$.
Taking the Fourier coefficient at $c \in Z$, we
obtain with $\lambda_c=\frac{(-1)^n}q \prod_{i=1}^n A_i$.
\begin{problem}[product]
For which $s$ is the product of non-zero spectral
values $\prod_{i=1}^{n} A_i$ of $F(x)=\mu(x^s)$ divisible by $q$?
\end{problem} 

\section{$p$-Divisibility}

For a power permutation $f(x)=x^s$, we define 
$$
\val L(s) = \min_{a\in L} \valp \big( \tfr f(a) \big).
$$
This minimum valuation is deducible from Stickelberger's Theorem on the $p$-divisibility of the Gauss sum
\[
\Gauss L (\chi) = \sum_{a \in \grmul L} \mu(a) \chi(a),
\]
for $\chi$ a multiplicative character of $L$.
One has \cite[eq.~(3)]{AKL} the formula
\begin{equation*}
\tfr f(a) 
= \frac q{q-1} + \frac 1{q-1} \sum_{\chi\not=1} 
\Gauss L (\chi) \Gauss L(\bar\chi^{s}) \chi^s(-a),
\end{equation*}
whence \cite[Lemma 4.1]{AKL}
$$
	\val L( s ) = \min_{1\not=\chi\in\grmul L} \valp\big(\Gauss L (\chi) \Gauss L(\bar\chi^{s})\big).
$$
Using the Hasse-Davenport relation, given an extension $L/K$, we obtain \cite[Corollary 4.2]{AKL}
\begin{equation}\label{Nestor}
\val L (s) \leq  \val K(s) \times [L:K].
\end{equation}

Recall the Helleseth Three-Valued Conjecture (Conjecture \ref{HSC}), which states that if $[L:\Fp]$ is a power of two, then the spectrum of a power permutation is not three-valued.
Feng \cite[Theorem 2]{FENG} showed that this conjecture holds in characteristic $p=2$ under the additional assumption that at least one Fourier coefficient in the reduced spectrum vanishes.
In \cite[Corollary 1.10]{DKC}, \cite[Theorem 1.7]{DKDIV} Katz showed that the conjecture holds in characteristic $p=2$ and $3$ without additional assumptions.

Feng uses the following proposition \cite{Calderbank-McGuire-Poonen-Rubinstein}
to obtain Conjecture \ref{HSC} in even characteristic
under the assumption that the exponent is singular.
\begin{proposition}[Calderbank, McGuire, Poonen, Rubinstein, 1996]
Let $s\not\approx 1$ be an invertible exponent. 
If $[L:\F_2]$ is a power of two, then 
$$2 \times \val L(s) \leq [L:\F_2].$$
\end{proposition}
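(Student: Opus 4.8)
The plan is to rewrite $\val L(s)$, via Stickelberger's theorem, as a purely combinatorial quantity built from binary digit sums, and then to establish the bound by descent along the tower $\Ft\subset\F_4\subset\F_{16}\subset\cdots\subset L$ of quadratic extensions, which is available precisely because $[L:\Ft]=n$ is a power of $2$. Since $p=2$ we have $\wp=(2)$, and $2$ is unramified in $\Q(\zeta_{2^n-1})$, so on Gauss sums $\valp$ is the ordinary $2$-adic valuation. Writing $q=2^n$, fixing the Teichm\"uller character $\omega$, and indexing the nontrivial multiplicative characters of $L$ by $a\in\{1,\dots,2^n-2\}$ through $a\leftrightarrow\omega^{-a}$, Stickelberger gives $\valp\big(\Gauss L(\omega^{-a})\big)=s_2(a)$, the number of ones in the binary expansion of $a$. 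Substituting this into the formula $\val L(s)=\min_{\chi\ne 1}\valp\big(\Gauss L(\chi)\,\Gauss L(\bar\chi^{s})\big)$ recorded above, and using the elementary identity $s_2\big((2^n-1)-t\big)=n-s_2(t)$ for $0\le t\le 2^n-1$, one obtains
\[
\val L(s)=\min_{1\le a\le 2^n-2}\Big(s_2(a)+s_2\big(-as\bmod(2^n-1)\big)\Big),
\]
so the task reduces to producing a single exponent $a$ with $s_2(a)+s_2\big(-as\bmod(2^n-1)\big)\le n/2$.

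Two families of choices dispose of the easy instances. First, the monomial choices $a=1$ and $a\equiv -s^{-1}\pmod{2^n-1}$ give $\val L(s)\le 1+n-s_2(s)$ and $\val L(s)\le 1+n-s_2(s^{-1})$, which already settles every $s$ for which $s$ or $s^{-1}$ has binary digit sum at least $n/2+1$. Second, I would apply the Hasse--Davenport descent \eqref{Nestor} with $K=\F_{2^{n/2}}$, noting $[L:K]=2$ and that $[K:\Ft]=n/2$ is again a power of $2$: since $\val K(s)$ depends only on $s\bmod(2^{n/2}-1)$, if that residue is not of the form $2^{j}$ then the proposition applied inductively to $K$ (the small base cases being either vacuous, as every invertible exponent modulo $3$ is $\approx 1$, or a direct check over $\F_{16}$) gives $\val K(s)\le n/4$, hence $\val L(s)\le 2\,\val K(s)\le n/2$. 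Combining the two reductions — and replacing $s$ by $2^{-j}s$, which changes neither $\val L(s)$ nor the hypothesis $s\not\approx 1$ — the only case remaining is
\[
s\equiv 1\pmod{2^{n/2}-1},\qquad s_2(s)\le n/2,\qquad s_2(s^{-1})\le n/2 .
\]

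In this remaining case $s$ has the special shape $s=u'\,2^{n/2}+(2^{n/2}-u')$ for a unique even $u'\in\{2,4,\dots,2^{n/2}-2\}$; that is, the top and bottom halves of the binary word of $s$ are $u'$ and its $(n/2)$-bit two's complement. The plan is to construct the witnessing exponent by forcing $as\bmod(2^n-1)$ to equal a repunit $2^{m}-1$ with $n/2<m<n$: for any such $m$ the exponent $a\equiv(2^{m}-1)s^{-1}\pmod{2^n-1}$ is nonzero and satisfies $s_2\big(-as\bmod(2^n-1)\big)=n-m$, so one wins as soon as some admissible $m$ produces an $a$ with $s_2(a)\le m-n/2$. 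Showing that such an $m$ always exists is the main obstacle. It is exactly here that the hypothesis that $n$ is a power of $2$ is essential: it guarantees that each of $2^{n/2}-1,\,2^{n/4}-1,\,\dots$ divides $2^{n}-1$ — so the required repunits exist and the construction can be iterated along the tower when a single step does not suffice — and that every digit-sum-$2$ exponent $2^{i}+2^{j}$ is non-invertible, which is precisely what eliminates the families of exponents that would otherwise survive all the preceding reductions and violate the bound.
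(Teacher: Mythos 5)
Your overall skeleton --- Stickelberger's theorem to convert $\val L(s)$ into $\min_{a}\bigl(s_2(a)+s_2(-as\bmod(2^n-1))\bigr)$, the two monomial witnesses $a=1$ and $a\equiv -s^{-1}$, and descent along the quadratic tower via the Hasse--Davenport inequality \eqref{Nestor} --- is sound and is essentially the architecture the paper uses in Section 4 (and that Calderbank--McGuire--Poonen--Rubinstein used originally). The reductions up to the ``remaining case'' are correct: the inductive step legitimately applies because $s\bmod(2^{n/2}-1)$ stays invertible, and your parity claim about $u'$ does hold once the $a=1$ and $a=-s^{-1}$ cases have been excluded (if $u'$ were odd one computes $s_2(s)=n/2+1$ and the $a=1$ witness already wins).

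The genuine gap is the final case, $s\equiv 1\pmod{2^{n/2}-1}$ with $s_2(s),s_2(s^{-1})\le n/2$, and you say so yourself: ``showing that such an $m$ always exists is the main obstacle.'' Nothing in your last paragraph overcomes it. The repunit construction is not self-certifying: for $\ell=n-1$ the requirement $s_2\bigl((2^\ell-1)s^{-1}\bigr)\le \ell-n/2$ collapses back to the already-excluded condition $s_2(s^{-1})\ge n/2+1$, and for smaller $\ell$ you give no argument that the digit-sum bound is ever met; the case is nonempty (e.g.\ $s=106$ over $\F_{2^8}$, where $s$ and $s^{-1}=166$ both have digit sum $4$), so it cannot be waved away. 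The observations that $2^{n/2}-1\mid 2^n-1$ and that exponents $2^i+2^j$ are non-invertible when $n$ is a power of two are true but do not produce the required witness. This residual case is exactly the ``Niho'' situation that the paper disposes of with Lemma \ref{QUADRA} (Charpin; \cite[Corollary 4.2, 4.4]{AKL}): for a quadratic extension $L/K$ with $x^s$ acting as (a Frobenius twist of) the identity on $K$ but not on $L$, some Fourier coefficient equals $-\abs{K}$, forcing $2\times\val L(s)=[L:\F_p]$ on the nose; combined with \eqref{Nestor} along the tower this is the whole proof of Remark \ref{Harold}. So either invoke Lemma \ref{QUADRA} at that point, or supply an actual proof of the repunit claim --- which would amount to reproving that lemma combinatorially.
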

\begin{remark}\label{Harold}
In fact, if $s\not\approx 1$ is an invertible exponent and $[L:\F_p]$ is a power of two, then $2 \times \val L(s) \leq [L:\F_p]$ still holds for an arbitrary prime $p$, as we shall show below.
\end{remark}
The following result \cite[Corollary 4.4]{AKL} shows what happens when $L/K$ is a quadratic extension in which $s \equiv 1 \pmod{|\grmul K|}$ but $s\not\equiv 1 \pmod{|\grmul L|}$.
The characteristic $2$ case was proved by Charpin \cite[Theorem 1, Corollary 1]{Charpin}.
\begin{lemma}[quadratic extension]
\label{QUADRA}
Let $L/K$ be a quadratic extension. If $x^s$ is
constant over $\grmul K$ but not over $\grmul L$, then there is an $a \in \grmul L$ such that $\tfr f(a) =- \abs{K}$ and $2 \times \val L(s) = [L:\F_p]$.
\end{lemma}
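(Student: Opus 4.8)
The plan is to base everything on one explicit evaluation of the Weil sum. Write $q_0=\abs K$, so that $[L:K]=2$, $\abs L=q_0^2$, and $[L:\Fp]=2[K:\Fp]$; fix a generator $\lambda$ of $\grmul L$, so that $\lambda^{q_0+1}$ generates $\grmul K$ and $\lambda^0,\lambda^1,\ldots,\lambda^{q_0}$ is a transversal of $\grmul K$ in $\grmul L$. I read the hypothesis as saying that $s$ is invertible, that $s\equiv 1\pmod{q_0-1}$ (equivalently $u^s=u$ for every $u\in\grmul K$), and that $s\not\sim 1$. The first and main step is to prove
\[
\tfr f(a)=q_0\bigl(M(a)-1\bigr),\qquad M(a)=\#\bigl\{\,i\in\{0,\ldots,q_0\}:\ \Tr_{L/K}(\lambda^{is}-a\lambda^i)=0\,\bigr\}.
\]
To get this I would split $\sum_{x\in L}\mu(x^s-ax)$ over the cosets $\lambda^i\grmul K$: writing $x=\lambda^iu$ with $u\in\grmul K$ and using $u^s=u$ gives $x^s-ax=(\lambda^{is}-a\lambda^i)u$, so the $i$th coset contributes $\sum_{u\in\grmul K}\mu(c_iu)$ with $c_i=\lambda^{is}-a\lambda^i$; factoring the absolute trace through $K$ shows $\mu(c_iu)=\mu_K\bigl(u\,\Tr_{L/K}(c_i)\bigr)$ for $u\in K$ (with $\mu_K$ the canonical character of $K$), so by orthogonality this sum equals $q_0-1$ when $\Tr_{L/K}(c_i)=0$ and $-1$ otherwise; adding the $x=0$ term and summing over $i$ yields the formula. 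Two consequences are immediate: $q_0\mid\tfr f(a)$ for every $a$, so $\val L(s)\ge[K:\Fp]$; and $M(0)=1$, because $\tfr f(0)=\Weil Ls(0)=0$. Hence it suffices to produce $a\in\grmul L$ with $M(a)=0$, for such an $a$ has $\tfr f(a)=-q_0=-\abs K$, and then $\valp(\tfr f(a))=[K:\Fp]$ together with $\val L(s)\ge[K:\Fp]$ forces $2\val L(s)=[L:\Fp]$.

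For the existence of such an $a$ I would argue by contradiction using two incidence counts. View $L$ as a two-dimensional $K$-space; then $H_i:=\{a\in L:\Tr_{L/K}(a\lambda^i)=\Tr_{L/K}(\lambda^{is})\}$ is a coset of the $K$-line $\lambda^{-i}\ker\Tr_{L/K}$, so $\abs{H_i}=q_0$ and $M(a)=\#\{i:a\in H_i\}$; counting incidences gives $\sum_{a\in L}M(a)=q_0(q_0+1)$. For $i\ne j$ the $K$-linear functionals $a\mapsto\Tr_{L/K}(a\lambda^i)$ and $a\mapsto\Tr_{L/K}(a\lambda^j)$ are $K$-independent — a dependence would force $\lambda^{i-j}\in K$, impossible since $\lambda^{q_0+1}$ generates $\grmul K$ and $0<\abs{i-j}\le q_0$ — so $\abs{H_i\cap H_j}=1$ and $\sum_{a\in L}\binom{M(a)}{2}=\binom{q_0+1}{2}$. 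Now suppose no $a\in\grmul L$ has $M(a)=0$; since also $M(0)=1$, we get $M(a)\ge 1$ for all $a\in L$, so $e_a:=M(a)-1\ge 0$ satisfies $\sum_ae_a=q_0$ and $\sum_a\binom{e_a}{2}=\binom{q_0}{2}$. Since $\sum_a\binom{e_a}{2}\le\binom{\sum_ae_a}{2}$, with equality only when at most one $e_a$ is nonzero, there is a single $a_0$ with $M(a_0)=q_0+1$, that is $\tfr f(a_0)=q_0^2=\abs L$.

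It remains to rule out $\tfr f(a_0)=\abs L$. This forces each of the $\abs L$ unit-modulus terms $\mu(x^s-a_0x)$ to equal $1$, i.e.\ $\mu\circ f=\mu_{a_0}$, an additive character; hence $\mu\bigl(x^s+y^s-(x+y)^s\bigr)=1$ for all $x,y\in L$, so $\delta(x,y):=x^s+y^s-(x+y)^s$ lies in $\ker\mu$. Replacing $(x,y)$ by $(cx,cy)$ scales $\delta$ by $c^s$, so if $\delta(x,y)\ne 0$ for some pair then $\delta(x,y)\,\grmul L=\grmul L\subseteq\ker\mu$, which is absurd since $\ker\mu$ is a proper $\Fp$-subspace of $L$. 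Thus $x\mapsto x^s$ is $\Fp$-linear, and a power map that is $\Fp$-linear must be $x\mapsto x^{p^k}$, i.e.\ $s\equiv p^k\pmod{q-1}$, i.e.\ $s\sim 1$ — contrary to hypothesis. Therefore some $a\in L$ has $M(a)=0$; it is nonzero because $M(0)=1$, so $a\in\grmul L$, $\tfr f(a)=-\abs K$, and both assertions follow.

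The main obstacle I foresee is the middle paragraph: one must extract from the two moment identities for $M$ that the only configuration preventing a zero of $M$ on $\grmul L$ is the degenerate one in which some point lies on all $q_0+1$ lines $H_i$, and then recognize that this degenerate case is precisely the excluded class $s\sim 1$. Establishing $\tfr f(a)=q_0(M(a)-1)$ and the incidence counts should, by contrast, be routine once $\grmul L$ is decomposed along $\grmul K$.
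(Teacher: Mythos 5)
Your argument is correct, but note that the paper itself gives no proof of Lemma \ref{QUADRA}: it is quoted from \cite[Corollary 4.4]{AKL} (with the characteristic-$2$ case attributed to Charpin), and the proof there runs through the Gauss-sum expansion of $\tfr f(a)$ displayed just before the lemma, Stickelberger's theorem, and the Hasse--Davenport relation. Your route is genuinely different and considerably more elementary. The coset decomposition of $\grmul L$ along $\grmul K$ gives the clean identity $\tfr f(a)=\abs{K}\,(M(a)-1)$, which at one stroke yields $\val L(s)\ge [K:\F_p]$ and reduces the whole lemma to exhibiting a single $a$ with $M(a)=0$; the two incidence counts for the $\abs{K}+1$ affine $K$-lines $H_i$ (pairwise meeting in exactly one point, by the $K$-independence of the trace functionals, which you justify correctly) leave, via the convexity equality case, only the degenerate configuration $\tfr f(a_0)=q$; and your additivity argument correctly identifies that configuration with $s\equiv p^k\pmod{q-1}$. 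All steps check: the character sum over each coset, $M(0)=1$ from $\tfr f(0)=0$, the moment bookkeeping $\sum_a e_a=\abs{K}$ and $\sum_a\binom{e_a}{2}=\binom{\abs{K}}{2}$, and the ring-endomorphism conclusion. What the Gauss-sum proof buys is a systematic valuation formula for $\val L(s)$ in general; what yours buys is a self-contained, transparent derivation that also makes explicit that \emph{every} Fourier coefficient is divisible by $\abs{K}$.

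One point worth making explicit in a write-up: you read the hypothesis as $s\not\sim 1$ over $L$ rather than the literal ``$x^s$ is not the identity on $\grmul L$.'' That is the right reading, and your own proof shows it is forced: for $s=p^{[K:\F_p]}$ the map fixes $\grmul K$ pointwise and is not the identity on $\grmul L$, yet its spectrum is $\{0,q\}$, so the conclusion fails. This reading is consistent with how the lemma is invoked in the paper, always under the hypothesis $s\not\approx 1$.
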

Using \eqref{Nestor}, we now see that if $1\not\approx s\equiv 1 \pmod{p-1}$ and $[L:\F_p]=2^r$, then $ 2 \times \val L(s) \leq [L:\F_p]$, thus validating Remark \ref{Harold}.

\section{Differential Multiplicity and the Uniformity Property}

For $f(x)=x^s$ a power permutation over $L$, we denote by $N(u,v)$ the number of
solutions in $L^2$ of the system
\begin{align*}
      x    + y    & = u\\
      f(x) + f(y) & = v.
\end{align*}
Since $(-y)^s=-y^s$, $N(u,v)$ is also 
the number of solutions of $x-y=u$, $f(x)-f(y) = v$.
Therefore, the numbers $N(u,v)$ are called the {\it differential multiplicities} 
of the exponent $s$.

Recall that $N(1,1)$ arose in equation \eqref{NONE} and was subsequently shown to be connected to the third power moment of the Fourier coefficients of $f$.
Note that if $u\not=0$, then $N(u,v)=N(1,v/u^s)$.
\begin{definition}
We say that a power permutation $f$ 
is $\Delta$-uniform over $L$  if 
the number of solutions $N(1,v)$ in $L^2$ of the system
\begin{align*}
      x   + y     & = 1,\\
      f(x)+  f(y) & = v,
\end{align*}
is equal to $0$ or $\Delta$ for all $v\not=1$.
\end{definition}

Katz \cite[Lemma 4.4, Theorem 4.5, Remark 4.6]{DKDIV} proved the following theorem using the group algebra techniques of Feng.
\begin{theorem}[Katz]
\label{UNIFORM}
Let $f(x)=x^s$ be a power permutation over $L$ with three-valued spectrum $\{0,A,B\}$, and write $A=p^a\alpha$, $B=p^b\beta$ and $A-B=p^c\gamma$ where $p\nmid \alpha, \beta, \gamma$.
Then $\alpha\beta\gamma$ divides the differential multiplicities $N(u,v)$ for all $v\not=u^s$ and 
$$
\abs{\alpha\beta\gamma} \leq - \frac{AB}q,
$$
leading to the alternative:
\begin{enumerate}[(i).]
\item\label{Alice} $a, b > \frac{1}{2} [L:\F_p]$ (impossible when $[L:\F_p]=2^r$), or
\item\label{Bob} $a=b = \frac{1}{2} [L:\F_p]$, $|\gamma|=1$, and $s$ is a $\abs{\alpha\beta}$-uniform exponent.  
\end{enumerate}
\end{theorem}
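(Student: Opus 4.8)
The plan is to work inside the group algebra $\C[L]$, exploiting the relation $\tfr f(a)\big(\tfr f(a)-A\big)\big(\tfr f(a)-B\big)=0$ together with its two ``partial'' factorizations. First I would record a few normalizations. Let $n_A$ and $n_B$ be the multiplicities of $A$ and $B$ in the spectrum of $f$. Since $\tfr f(0)=0$, Fourier inversion and Parseval give $An_A+Bn_B=q$ and $A^2n_A+B^2n_B=q^2$; as $n_A,n_B>0$ this forces $0<A<q$, $B<0$, $AB<0$, yields the closed forms $n_A=q(q-B)/\big(A(A-B)\big)$ and $n_B=q(q-A)/\big(B(B-A)\big)$, and---because a prime $\ell\ne p$ dividing both $A$ and $B$ would make $\ell^2$ divide $A^2n_A+B^2n_B=q^2$---shows that $\gcd(A,B)$ is a power of $p$, i.e.\ $\gcd(\alpha,\beta)=1$.

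Next I would evaluate the convolutional square $f^{[2]}$. The Fourier coefficient of $f^{[2]}$ at $a$ is $\tfr f(a)^2=A\,\tfr f(a)+B(B-A)\,\mathbf 1_B(a)$, where $\mathbf 1_B$ is the indicator of $\{a:\tfr f(a)=B\}$; since $\mathbf 1_B$ is the Fourier transform of $\frac1q\sum_{\tfr f(d)=B}\mu_d$, Fourier inversion gives
\[
f^{[2]}(c)=A\,\mu(c^s)+\frac{B(B-A)}{q}\sum_{\tfr f(d)=B}\mu(dc),
\]
and likewise with $A$ and $B$ interchanged. For $u\ne0$ one has, by orthogonality and a change of variable, $N(u,v)=1+\frac1q\sum_{c\ne0}f^{[2]}(c)\,\mu\big(-c^s vu^{-s}\big)$; substituting the displayed identity and simplifying---the crucial cancellations coming from $\sum_{c\ne0}\mu\big(c^s(1-vu^{-s})\big)=-1$ when $v\ne u^s$ and from $B(B-A)n_B=q(q-A)$---I expect to be left with
\[
q^2N(u,v)=B(B-A)\sum_{\tfr f(d)=B}\tfr f\big(d\,u\,v^{-1/s}\big)\qquad(u\ne0,\ v\notin\{0,u^s\}),
\]
together with its companion $q^2N(u,v)=A(A-B)\sum_{\tfr f(d)=A}\tfr f\big(d\,u\,v^{-1/s}\big)$; the cases $u=0$ or $v=0$ are trivial (then $N(u,v)=0$). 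The two right-hand sums are rational integers, $B(B-A)=-p^{b+c}\beta\gamma$, $A(A-B)=p^{a+c}\alpha\gamma$, and $\gcd(q,\alpha\beta\gamma)=1$; hence $\beta\gamma\mid N(u,v)$ and $\alpha\gamma\mid N(u,v)$, and since $\gcd(\alpha,\beta)=1$ these combine to $\alpha\beta\gamma\mid N(u,v)$, which is the first assertion.

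For the inequality I would pass to the first two power moments of $v\mapsto N(u,v)$ with $u\ne0$ fixed. Plainly $\sum_vN(u,v)=q$, and Parseval applied to $f^{[2]}$ gives $\sum_vN(u,v)^2=q^{-2}\big(A^4n_A+B^4n_B\big)=A^2+AB+B^2-AB(A+B)/q$. Removing the contributions of $v=u^s$ (where $N(u,v)=V=A+B-AB/q$ by \eqref{NONE}) and of $v=0$ (where $N(u,v)=0$) yields $\sum_{v\ne u^s}N(u,v)=(q-A)(q-B)/q$ and $\sum_{v\ne u^s}N(u,v)^2=-AB(q-A)(q-B)/q^2$. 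Writing $N(u,v)=\abs{\alpha\beta\gamma}\,m_v$ with $m_v$ a nonnegative integer (legitimate by the divisibility just proved), the tautology $0\le\sum_{v\ne u^s}m_v(m_v-1)$ becomes, upon inserting the two moment formulas and dividing through by $(q-A)(q-B)>0$, exactly $\abs{\alpha\beta\gamma}\le -AB/q$; moreover equality holds here precisely when every $m_v\in\{0,1\}$, that is, when $N(u,v)\in\{0,\abs{\alpha\beta\gamma}\}$ for all $u\ne0$ and all $v\ne u^s$, i.e.\ when $s$ is $\abs{\alpha\beta\gamma}$-uniform.

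Finally, the dichotomy is valuation bookkeeping. Since $-AB/q=p^{a+b-m}\abs{\alpha\beta}$ with $m=[L:\Fp]$, the bound reads $\abs\gamma\le p^{a+b-m}$, so $\abs\gamma\ge1$ forces $a+b\ge m$. Suppose we are not in case~\ref{Alice}, i.e.\ $\min(a,b)\le m/2$. If $a\ne b$ then $\valp(A-B)=\min(a,b)$, and writing $A-B=p^{\min(a,b)}\gamma$ one sees $\gamma$ equals $\alpha+p^{\,b-a}\abs\beta$ (if $a<b$) or $p^{\,a-b}\alpha+\abs\beta$ (if $a>b$), hence $\abs\gamma\ge1+p^{\abs{a-b}}$; combined with $\abs\gamma\le p^{a+b-m}$ and $\abs{a-b}=(a+b)-2\min(a,b)$, this forces $\min(a,b)>m/2$, a contradiction. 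Hence $a=b$, whence $a+b\ge m$ and $\min(a,b)\le m/2$ give $a=b=m/2$ (so $m$ is even); then $\abs\gamma\le p^0=1$, so $\abs\gamma=1$ and $-AB/q=\abs{\alpha\beta}=\abs{\alpha\beta\gamma}$, meaning equality holds in the moment inequality and $s$ is $\abs{\alpha\beta}$-uniform---this is case~\ref{Bob}. (When $[L:\Fp]=2^r$, case~\ref{Alice} cannot occur, because Remark~\ref{Harold} gives $\min(a,b)=\val L(s)\le\frac12[L:\Fp]$.) The step I expect to be most delicate is pinning down the exact identity $q^2N(u,v)=B(B-A)\sum_{\tfr f(d)=B}\tfr f(d\,u\,v^{-1/s})$ in the second paragraph; everything afterward is elementary once it and the two moment identities are in hand.
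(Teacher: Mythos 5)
Your proposal is correct and follows essentially the same route as the paper: the divisibility comes from the two partial factorizations of $\tfr f\,(\tfr f-A)(\tfr f-B)=0$ (your explicit identity $q^2N(u,v)=B(B-A)\sum_{\tfr f(d)=B}\tfr f(duv^{-1/s})$ is just a closed form of the paper's subtraction of $A$ times \eqref{Veronica} from \eqref{Gabriel}), the bound $\abs{\alpha\beta\gamma}\le -AB/q$ is the paper's inequality \eqref{Hubert} rewritten as $\sum m_v(m_v-1)\ge 0$, and the valuation dichotomy is the same bookkeeping. The only noteworthy economies are that you get by with $\gcd(\alpha,\beta)=1$ plus an lcm argument instead of the full pairwise coprimality of Lemma \ref{Laurent}, and your integer sum over $\{d:\tfr f(d)=B\}$ avoids the paper's implicit appeal to algebraic integrality.
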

\begin{remark}\label{Raphael}
Note that because of Lemma \ref{QUADRA}, 
case \eqref{Alice} is impossible when $[L:\F_p]$ is
a power of $2$.
\end{remark}
\begin{corollary}\label{Richard}
If $p=2$ or $p=3$ then the Helleseth Three-Valued Conjecture is true.
\end{corollary}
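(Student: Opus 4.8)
My plan is to argue by contradiction, funneling everything into case~\eqref{Bob} of Theorem~\ref{UNIFORM} and then exploiting features special to characteristics $2$ and $3$. Suppose $x^{s}$ is a three-valued power permutation over a field $L$ with $[L:\Fp]$ a power of $2$. If $[L:\Fp]=1$, then Theorem~\ref{Katherine} gives $s\equiv 1\pmod{p-1}$, so modulo $\card{\grmul L}=p-1$ the exponent is $1$, and the reduced spectrum of $x\mapsto x$ over $\grmul L$ is $\{0,p\}$ (or $\{p\}$ when $p=2$), which is not three-valued; so I may assume $[L:\Fp]=2^{r}$ with $r\ge 1$. By Theorem~\ref{Katherine} the spectrum is $\{0,A,B\}\subseteq\Z$ and $s\equiv 1\pmod{p-1}$; writing $A=p^{a}\alpha$, $B=p^{b}\beta$, $A-B=p^{c}\gamma$ with $p\nmid\alpha\beta\gamma$, Theorem~\ref{UNIFORM} together with Remark~\ref{Raphael} rules out case~\eqref{Alice}, so case~\eqref{Bob} holds: $a=b=2^{r-1}$, $\abs{\gamma}=1$, and $s$ is $\Delta$-uniform with $\Delta=\abs{\alpha\beta}$. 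Since $a=b$ we have $A-B=p^{a}(\alpha-\beta)$, and $\abs{\gamma}=1$ forces $\alpha-\beta=\pm p^{\,c-a}$.

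For $p=2$ I would finish as follows. Since $p\nmid\alpha\beta$, the uniformity $\Delta=\abs{\alpha\beta}$ is odd. On the other hand, for $u\ne 0$ the transposition $(x,y)\mapsto(y,x)$ has no fixed point on the solution set of $x+y=u$, $x^{s}+y^{s}=v$, so every differential multiplicity $N(u,v)$ is even; combined with the $\Delta$-uniformity and $\Delta$ odd, this forces $N(1,v)=0$ for every $v\ne 1$, hence $N(1,1)=q$, i.e.\ $x^{s}+(1-x)^{s}=1$ identically on $L$. As $1-x=1+x$ in characteristic $2$, this says $(x+1)^{s}=x^{s}+1$ on $L$, and taking the exponent with $1\le s\le q-2$ makes both sides polynomials of degree $<q$, so $(x+1)^{s}=x^{s}+1$ in $L[x]$; by Kummer's theorem $s$ must be a power of $2$, whence $s\sim 1$ and $x^{s}$ is additive with two-valued spectrum --- a contradiction. (For $p=2$ one could instead simply combine Theorem~\ref{Katherine} with Feng's theorem~\cite{FENG}.)

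For $p=3$ the same reduction lands us in case~\eqref{Bob}, but the parity argument degenerates, and ruling out case~\eqref{Bob} here is the step I expect to be hardest. Indeed $\alpha-\beta=\pm 3^{\,c-a}$ is odd, so exactly one of $\alpha,\beta$ is even and $\Delta=\abs{\alpha\beta}$ is \emph{even}; moreover $(x,y)\mapsto(y,x)$ now has the single fixed point $x=y=u/2$, which is a solution precisely when $v=2(u/2)^{s}=u^{s}$ (as $s$ is odd), so $N(u,v)$ is still even for every $v\ne u^{s}$ --- consistent with $\Delta\mid N(u,v)$, so no contradiction emerges from parity alone. One still obtains real constraints: $A,B$ must have the shape $3^{2^{r-1}}\alpha$, $3^{2^{r-1}}\beta$ with $\alpha-\beta$ a power of $3$, and $N(1,1)=A+B-AB/q\ge 0$ by \eqref{NONE} (it is a solution count), which eliminates most candidate pairs $(\alpha,\beta)$; but a residue of ``phantom'' candidate spectra survives these elementary conditions, and showing that no actual exponent realizes any of them is the crux. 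This is exactly where one must invoke the finer valuation and counting arguments of Katz~\cite{DKDIV} --- built on Feng's group-algebra method and essentially encapsulated by the proof of Theorem~\ref{UNIFORM} --- to exclude case~\eqref{Bob} in characteristic $3$. Granting this, no three-valued power permutation exists over such an $L$, and the Helleseth Three-Valued Conjecture holds for $p=3$ as well.
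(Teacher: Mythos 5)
Your $p=2$ argument is correct and is a legitimate alternative to the paper's: you push Feng's parity observation further than needed (all $N(1,v)$ are even, so oddness of $\Delta=\abs{\alpha\beta}$ forces $N(1,1)=q$, whence $(x+1)^s=x^s+1$ and $s$ is a power of $2$), whereas the paper only uses that $N(1,1)$ is even. But your $p=3$ case contains a genuine gap: you explicitly stop at ``a residue of phantom candidate spectra survives'' and defer to ``the finer valuation and counting arguments of Katz \ldots essentially encapsulated by the proof of Theorem~\ref{UNIFORM}.'' That deferral cannot work as stated, because Theorem~\ref{UNIFORM} is exactly what leaves case~\eqref{Bob} open; it does not exclude it in characteristic $3$, and no amount of re-reading its proof will. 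So the crux of the corollary for $p=3$ is simply not proved in your write-up.

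The missing ingredient is a single divisibility fact plus a one-line computation, and it handles $p=2$ and $p=3$ uniformly. The external fact is that $p \mid N(1,1)$, proved for $p=2$ in \cite{FENG} (your pairing argument) and for $p=3$ in \cite[Lemma 4.2]{DKDIV} (this is a genuine input; it does not follow from parity or positivity of $N(1,1)$). Now in case~\eqref{Bob}, Theorem~\ref{Katherine} puts $A,B \in \Z$, and the $\wp$-divisibility \eqref{Peter} then gives $p \mid A$ and $p \mid B$. Feeding this into \eqref{NONE}, $N(1,1) = A + B - AB/q$, the divisibility $p \mid N(1,1)$ forces $p \mid AB/q$, i.e.\ $pq \mid AB$. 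But in case~\eqref{Bob} one has $a=b=\tfrac12[L:\F_p]$, so $AB/q = \pm\abs{\alpha\beta}$ is coprime to $p$ --- a contradiction that kills case~\eqref{Bob} outright for both $p=2$ and $p=3$. Together with Remark~\ref{Raphael}, which disposes of case~\eqref{Alice} when $[L:\F_p]$ is a power of $2$, this completes the proof. Your reduction to case~\eqref{Bob} and your auxiliary observations ($\alpha-\beta = \pm p^{c-a}$, evenness of $N(u,v)$ for $v \neq u^s$ in odd characteristic) are correct but do not substitute for this step.
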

\begin{proof}
$N(1,1)$ is divisible by $p$ when $p=2$ (see \cite[Proof of Theorem 2]{FENG}) and when $p=3$ (see \cite[Lemma 4.2]{DKDIV}).
If $f(x)=x^s$ is three-valued, then $A$ and $B$ lie in $\Z$ (by Theorem \ref{Katherine}), and so they are divisible by $p$ by \eqref{Peter}, and then \eqref{NONE} tells us that $p q \mid A B$, so that we cannot be in case \eqref{Bob} of Theorem \ref{UNIFORM}.
On the other hand, Remark \ref{Raphael} shows that case \eqref{Alice} is impossible when $[L:\F_p]$ is a power of $2$.
\end{proof}
We end this section by proving Theorem \ref{UNIFORM}
without the language of the group algebra. Let $f(x)=x^s$ be a power 
permutation with a three-valued spectrum with values $0$, $A$, and $B$.
In view of \eqref{INVMUL}, for each $u \in \grmul L$, the spectrum of $f_u$ has the same values $0$, $A$, and $B$.
Write
\[
   A = p^a \alpha,\quad B=p^b \beta,\quad A-B=p^c \gamma
\]
with $\alpha$, $\beta$ and $\gamma$ coprime to $p$.
\begin{lemma}\label{Laurent}
The integers $\alpha$, $\beta$ and $\gamma$ are
pairwise coprime, $\alpha \gamma$ divides $q-B$, and
$\beta\gamma$ divides $q-A$.
\end{lemma}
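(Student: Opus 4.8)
The plan is to read off the lemma from the two lowest power moments of the Fourier coefficients, combined with the prime-to-$p$ factorizations already fixed in the setup. Write $n_A$ (resp.\ $n_B$) for the number of $a\in\grmul L$ with $\tfr f(a)=A$ (resp.\ $=B$); since $f$ is genuinely three-valued both are at least $1$, and because $\tfr f(0)=0\neq A,B$ these are also the multiplicities of $A$ and $B$ over all of $L$. First I would record two evaluations: $\sum_{a\in L}\tfr f(a)=q$ from the inversion formula at the origin (using $f(0)=0$), and $\sum_{a\in L}\tfr f(a)^2 = q\sum_{x\in L}\abs{\mu(f(x))}^2 = q^2$ from the Parseval-Plancherel identity together with the fact that the coefficients are real (equation \eqref{Larry}). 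Counting by multiplicities these read
\[
n_A A + n_B B = q, \qquad n_A A^2 + n_B B^2 = q^2.
\]
Eliminating $n_B$ and substituting gives $n_A A(A-B) = q(q-B)$, and by symmetry $n_B B(B-A)=q(q-A)$.

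Next I would put in the factorizations $A=p^a\alpha$, $B=p^b\beta$, $A-B=p^c\gamma$. Since $q$ is a power of $p$ while $\alpha$ and $\gamma$ are coprime to $p$, the product $\alpha\gamma$ is coprime to $q$; rewriting $n_A A(A-B)=q(q-B)$ as $n_A p^{a+c}\alpha\gamma = q(q-B)$ shows $\alpha\gamma\mid q(q-B)$, hence $\alpha\gamma\mid q-B$. The symmetric argument gives $\beta\gamma\mid q-A$. (Recall that $A,B\in\Z$ for a three-valued power permutation by Theorem \ref{Katherine}, so these divisibilities are genuine statements about integers.)

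For the pairwise coprimality I would first rule out a prime $\ell$ dividing all three of $\alpha,\beta,\gamma$: such an $\ell$ is distinct from $p$ and divides both $A$ and $B$, so $\ell^2$ divides $n_A A^2 + n_B B^2 = q^2$, which is impossible as $q^2$ is a power of $p$. Then I would note that a prime $\ell$ dividing any two of $\alpha,\beta,\gamma$ divides the corresponding two of $A$, $B$, $A-B$ (again $\ell\neq p$), whereupon $A-B=p^c\gamma$ forces $\ell$ to divide the third as well, contradicting the first step. Hence $\alpha,\beta,\gamma$ are pairwise coprime.

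I do not expect a genuine obstacle: once the two moment identities are in hand the rest is short elementary number theory. The only points needing a little care are the careful separation of the $p$-part $q$ from the prime-to-$p$ quantities $\alpha,\beta,\gamma$ in passing from $\alpha\gamma\mid q(q-B)$ to $\alpha\gamma\mid q-B$, and the observation that $A-B=p^c\gamma$ promotes a common divisor of two of the three numbers to a common divisor of all three. As a sanity check (not needed in the proof), the third moment $\sum_{a\in L}\tfr f(a)^3 = q^2 N(1,1)$ from \eqref{Elaine}, combined with \eqref{NONE}, yields a further relation consistent with the ones above.
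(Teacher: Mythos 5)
Your proof is correct and follows essentially the same route as the paper: the paper derives the lemma from the multiplicity formulas $N_A = q(q-B)/(A(A-B))$ and $N_B = q(q-A)/(B(B-A))$ (citing Proposition 3.2 of \cite{DKC}, which comes from exactly the first and second power moments you compute), and says the claims "quickly follow." You have simply written out the derivation of those formulas and the short integer-divisibility argument that the paper leaves implicit; all steps check out.
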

\begin{proof}
  From the first and second power moments of the spectrum of $f$, one may deduce (see \cite[Proposition 3.2]{DKC}) that if $\tfr f(x)=A$ for $N_A$ values of $x \in \grmul L$ and $\tfr f(x)=B$ for $N_B$ values of $x \in \grmul L$, then
  \begin{align*}
    N_A & = \frac{q(q-B)}{A(A-B)} \\
    N_B & = \frac{q(q-A)}{B(B-A)},
  \end{align*}
  from which all of our claims quickly follow.
\end{proof}

Now we proceed to a proof of Theorem \ref{UNIFORM}.
Using a character counting principle, we have
\begin{align}
   N(u,v) &=\frac 1{q^2} \sum_{y,z}\sum_{w, x\in L} \mu_x( f(y)+f(z) - v ) \bar\mu_w(y+z - u ) \label{Gabriel} \\ 
&=\frac 1{q^2} \nonumber
\sum_{w,x\in L} \tfr {f_x}(w)^2 \bar\mu_x(v) \mu_w( u ).
\end{align}
In other words, $\tfr {f_x}(w)^2$ is a Fourier coefficient
of $N$ over the group $L\times L$. On the other hand,
\begin{align}
\frac{1}{q^2} \sum_{w,x\in L} \tfr {f_x}(w) \bar\mu_x(v) \mu_w( u ) 
& = \frac{1}{q} \sum_{x\in L} \mu_x( f(u) ) \bar\mu_x(v) \label{Veronica} 
 \\
& = \delta_{v}(f(u)) \nonumber \\
& = \delta_v(u^s). \nonumber
\end{align}
By subtraction of $A$ times \eqref{Veronica} from \eqref{Gabriel}, it follows that $N(u,v)$ is divisible by $\beta\gamma$ when $v\not=u^s$, and similarly, by subtracting $B$ times \eqref{Veronica} from \eqref{Gabriel}, it follows that $N(u,v)$ is divisible by $\alpha\gamma$ when $v\not=u^s$.
So when $v\not=u^s$, Lemma \ref{Laurent} shows us that $\alpha\beta\gamma\mid N(u,v)$.

Applying the Parseval relation to the mapping $N$,
\begin{equation}\label{Roger}
	\sum_{w, x} \tfr {f_x}(w)^4 = q^2 \sum_{u,v} N(u,v)^2,
\end{equation}
where we use $\tfr{f_x}(w)^4$ in place of $|\tfr{f_x}(w)|^4$ because \eqref{Larry} shows that $\tfr{f_x}(w)$ is always a real number.
For our power permutation $f$, recall that for any $x \in \grmul L$, the function $f_x=x f$ has the same spectrum as $f$, and note that $\tfr f(0)=0$.
On the other hand $f_0=0 f=0$ has spectrum $\tfr{f_0}(0)=q$ and $\tfr{f_0}(w)=0$ for $w \in \grmul L$.
        Also note that $N(u,v)=N(1,v/u^s)$ and $N(u,0)=0$ when $u\not=0$, while $N(0,0)=q$ and $N(0,v)=0$ when $v\not=0$.
        With these observations, \eqref{Roger} becomes
\begin{equation}
\label{CV}
	\sum_{w} \tfr f(w)^4 = q^2 \sum_{v\not=0} N(1,v)^2,
\end{equation}
and since the spectrum is three-valued with values $0$, $A$, and $B$, we have
\begin{align*}
        \sum_{w} \tfr f(w)^4 
& = (A+B) \sum_{w} \tfr f(w)^3 -AB \sum_{w} \tfr f(w)^2 \\
        & = (A+B) q^2 V - AB q^2,
\end{align*}
where we have used the calculation \eqref{Elaine} for the third power moment, and the well-known value $q^2$ of the second power moment (which can be obtained by a similar, but easier calculation).
We substitute the fourth power moment into \eqref{CV} to obtain
$$
	-V^2 + (A+B) V - AB  = \sum_{v\not\in\{0,1\} } N(1,v)^2,
$$
        because $V$ denotes the same number as $N(1,1)$.  Then since $\alpha\beta\gamma$ divides $N(1,v)$ when $v\not=1$, we have
        \begin{equation}\label{Hubert}
        N(1,v)^2 \geq |\alpha\beta\gamma| N(1,v)
        \end{equation}
        for all $v\not=1$, and so
\begin{align}
\label{FIRST}
       - (V-A) (V-B)  & \geq |\alpha\beta\gamma|  \sum_{v\not\in\{0,1\} } N(1,v) \\
 & = |\alpha\beta\gamma| (q-V), \nonumber
\end{align}
since $\sum_{v} N(1,v)=q$, inasmuch as it counts the solutions in $L^2$ of $x+y=1$, and we have noted that $N(1,1)=V$ and $N(1,0)=0$.
We substitute the value of $V$ from \eqref{NONE} into \eqref{FIRST}, and simplify to obtain
\[
	|\alpha\beta\gamma| \leq \frac {-A B}q.
\]
Note that this proves that $A$ and $B$ have opposite sign, and then
\begin{equation}
\label{INEQ}
|\gamma| \leq \frac{p^{a+b}}{q}.
\end{equation}

For the rest, we proceed as in the proof of Theorem 4.5 and Remark 4.6 in \cite{DKDIV}.
If $a\not=b$, then we would have $c=\min\{a,b\}$.
Then let $d=\max\{a,b\}$, and so $q |A-B|= q p^c |\gamma|  \leq p^{a+b+c} = p^{2 c+d}$.  But since $A$ and $B$ have opposite signs, $|A-B| > \max\{|A|,|B|\} \geq p^d$, so that $q p^d < q |A-B| \leq p^{2 c+d}$, and so $p^c > \sqrt{q}$, and thus $a, b > \frac{1}{2} [L:\F_p]$, which is case \eqref{Alice} in the statement of Theorem \ref{UNIFORM}.

If $a=b$, then inequality \eqref{INEQ} shows that $a=b \geq \frac{1}{2} [L:\F_p]$, and if this inequality is strict, we are again in case \eqref{Alice}.
Otherwise, we have $a=b=\frac{1}{2}[L:\F_p]$, and then $|\gamma|=1$ and inequality \eqref{INEQ} becomes an equality, as do the previous inequalities from which it was deduced, and in particular \eqref{Hubert} shows that $N(1,v)=0$ or $|\alpha\beta\gamma|=|\alpha\beta|$ for all $v\not=1$, that is, $s$ is $|\alpha\beta|$-uniform.  This is case \eqref{Bob} in the statement of Theorem \ref{UNIFORM}.

We have already noted why case \eqref{Alice} is impossible when $[L:\F_p]$ is a power of $2$ in Remark \ref{Raphael}.  This completes the proof of Theorem \ref{UNIFORM}.
\section{Conjectures on Differential Uniformity}

The previous section shows that the Helleseth Three-Valued
Conjecture depends on the nonexistence of uniform 
exponents in certain cases. We present a numerical experiment \cite{NICEXP}
which shows that the nonexistence of such exponents could be
the key point to obtain a proof of Helleseth's conjecture.

\begin{definition}
Let $s$ be an invertible exponent over the finite field $L$.
We say that $s$ is a \emph{nice exponent over $L$} if
the number $N(1,v)$ of solutions in $L^2$ of 
\begin{align*}
      x   + y    = 1,\\
      x^s + y^s  = v,
\end{align*}
takes at most $3$ values as $v$ runs through $L$.
\end{definition}
\begin{remark}
A $\Delta$-uniform exponent is nice.
\end{remark}
Recall the relation $\approx$ 
defined near the beginning of Section \ref{Joseph}.
\begin{remark}
If $s$ is a nice exponent, then every exponent
$s'\approx s $ equivalent to $s$ is also nice.
\end{remark}

\begin{remark}
If $s\approx 1$ then $s$ is trivially a nice exponent with two differential
multiplicities: $0$ and $q$. 
\end{remark}
We show two paradigmatic examples of nice exponents in the following propositions.
\begin{proposition}
\label{PARADIG}
The exponent $s=3$ is nice if and only if $q\not\equiv 1 \pmod 3$, with the following differential multiplicities:
\vskip 3mm
\begin{center}
\begin{tabular}{ccc}
\hline
characteristic & differential multiplicities & respective frequencies \\
\hline
\hline
$2$ & $0$, $2$ & $q/2$, $q/2$ \\
$3$ & $0$, $q$ & $q-1$, $1$ \\
$p>3$ & $0$, $1$, $2$ & $q/2-1$, $1$, $q/2-1$\\
\hline
\end{tabular}
\end{center}
\end{proposition}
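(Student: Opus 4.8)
The plan is to evaluate each differential multiplicity $N(1,v)$ directly. Substituting $y=1-x$ and using $(1-x)^3=1-3x+3x^2-x^3$, the system $x+y=1$, $x^3+y^3=v$ becomes the single equation
\[
3x^2-3x+(1-v)=0,
\]
whose roots $x\in L$ are in bijection (via $x\mapsto(x,1-x)$) with the solutions counted by $N(1,v)$; so $N(1,v)$ is the number of roots in $L$ of the polynomial $3x^2-3x+(1-v)$, and everything then depends only on the characteristic $p$. The equivalence with $q\not\equiv1\pmod{3}$ is essentially formal: a nice exponent is in particular invertible, and $\gcd(3,q-1)=1$ holds exactly when $3\nmid q-1$, i.e.\ when $q\not\equiv1\pmod{3}$; so when $q\equiv1\pmod{3}$ the map $x\mapsto x^3$ is not a power permutation and $s=3$ is not a nice exponent. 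The substance of the proposition is thus the ``if'' direction and the table, for which we assume $q\not\equiv1\pmod{3}$.

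Now I would treat the three characteristics in turn. If $p=3$ the polynomial $3x^2-3x+(1-v)$ collapses to the constant $1-v$, so $N(1,1)=q$ and $N(1,v)=0$ for all $v\neq1$; this is the characteristic-$3$ row, and here $q=3^n$ is automatically $\not\equiv1\pmod{3}$, consistent with $s=3\sim1$. If $p=2$ the polynomial becomes the Artin--Schreier quadratic $x^2+x+(1+v)=0$; the $\F_2$-linear map $x\mapsto x^2+x$ on $L$ has kernel $\F_2$, hence image of size $q/2$, equal to $\{c\in L:\Tr_{L/\F_2}(c)=0\}$, so $N(1,v)=2$ when $\Tr_{L/\F_2}(1+v)=0$ and $N(1,v)=0$ otherwise, each occurring for $q/2$ values of $v$; this is the characteristic-$2$ row. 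If $p>3$ then $3$ is a unit, the discriminant of $3x^2-3x+(1-v)$ equals $9-12(1-v)=3(4v-1)$, and $N(1,v)$ is $2$, $1$, or $0$ according as $3(4v-1)$ is a nonzero square, is zero, or is a nonsquare in $L$; since $v\mapsto3(4v-1)$ is an affine bijection of $L$ (as $p\nmid12$), as $v$ runs over $L$ the discriminant vanishes for exactly one value (namely $v=1/4$, giving $N(1,v)=1$), is a nonzero square for $(q-1)/2$ values (giving $N(1,v)=2$), and is a nonsquare for $(q-1)/2$ values (giving $N(1,v)=0$). In every case $N(1,\cdot)$ takes at most three values, so $s=3$ is nice, and the values and frequencies are those displayed.

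I do not expect a real obstacle here: once the substitution $y=1-x$ is made, the whole proof is elementary root-counting for quadratics over a finite field. The only points that need care are the degeneration of the quadratic $3x^2-3x+(1-v)$ in characteristics $3$ (total collapse) and $2$ (loss of $3$ as a unit), and the fact that the ``only if'' half of the equivalence is forced entirely by the invertibility clause in the definition of a nice exponent rather than by the distribution of the $N(1,v)$, which as the computation shows takes at most three values regardless of $q$ modulo $3$.
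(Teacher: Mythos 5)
Your proof is correct and follows essentially the same route as the paper's: substitute $y=1-x$, reduce to the quadratic $3x^2-3x+(1-v)$, treat characteristic $3$ as the degenerate collapse, count roots via the trace (additive Hilbert 90) in characteristic $2$, and use the quadratic character of the discriminant for $p>3$, with the ``only if'' direction coming purely from invertibility of the exponent. Two small points in your favor: your frequencies $(q-1)/2$, $1$, $(q-1)/2$ for $p>3$ are the correct ones (they sum to $q$, whereas the table's $q/2-1$ is not even an integer for odd $q$ and is evidently a typo), and your discriminant $3(4v-1)$ corrects a sign slip in the paper's write-up, which records the constant term as $(v-1)/3$ where $(1-v)/3$ is meant --- neither error affects the frequency counts, since $v\mapsto$ discriminant is an affine bijection either way.
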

\begin{proof}
The congruence condition is necessary and sufficient to make $3$ an invertible exponent, and the result for characteristic $3$ is trivial, for then $3\approx 1$.
In other characteristics, $N(1,v)$ is the number of solutions in $L$ of $x^3+(1-x)^3=v$, which is the number of roots in $L$ of the quadratic polynomial $x^2-x+(v-1)/3$.
In characteristic $2$, the additive Hilbert Theorem 90 shows that this quadratic polynomial has zero or two roots in $L$ depending on whether the absolute trace (from $L$ to $\Fp$) of $(v-1)/3$ is $1$ or $0$, respectively.
In odd characteristic, the quadratic polynomial has zero, one, or two roots in $L$
depending on whether the discriminant $(7- 4 v)/3$ is a quadratic
nonresidue, zero, or a quadratic residue in $L$, respectively.
\end{proof}
\begin{proposition}
\label{MINUSONE}
The exponent $s=q-2$ is nice if and only if $q\not\equiv 1 \pmod 6$, with the following differential multiplicities:
\vskip 3mm
\begin{center}
\begin{tabular}{ccc}
\hline
order & differential multiplicities & respective frequencies \\
\hline
\hline
$q\equiv 2 \pmod{6}$ & $0$, $2$ & $q/2$, $q/2$ \\
$q\equiv 3 \pmod{6}$ & $0$, $2$, $3$ & $(q+1)/2$, $(q-3)/2$, $1$ \\
$q\equiv 4 \pmod{6}$ & $0$, $2$, $4$ & $q/2+1$, $q/2-2$, $1$ \\
$q\equiv 5 \pmod{6}$ & $0$, $1$, $2$ & $(q-1)/2$, $1$, $(q-1)/2$\\
\hline
\end{tabular}
\end{center}
\end{proposition}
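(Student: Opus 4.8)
The plan is to compute $N(1,v)$ explicitly. Since $\gcd(q-2,q-1)=1$, the exponent $s=q-2$ is always invertible, so only the niceness condition is in question. Because $x^{q-2}=x^{-1}$ on $\grmul L$ while $0^{q-2}=0$, writing $g(x)=x^{q-2}$ we see that $N(1,v)$ counts the $x\in L$ with $g(x)+g(1-x)=v$. First I would peel off the two ``boundary'' solutions $x=0$ and $x=1$: both give $g(x)+g(1-x)=1$, so they contribute exactly $2$ to $N(1,1)$ and nothing elsewhere. For $x\notin\{0,1\}$ one has $g(x)+g(1-x)=\tfrac1x+\tfrac1{1-x}=\tfrac1{x(1-x)}$, a nonzero field element, so $N(1,0)=0$, and for $v\neq0$ the remaining solutions are exactly the roots $x\in L$ of the quadratic $x^2-x+v^{-1}=0$, none of which can equal $0$ or $1$. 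Thus for $v\neq0$,
\[
  N(1,v)=2\,[\,v=1\,]+\#\{x\in L:\ x^2-x+v^{-1}=0\}.
\]

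In characteristic $2$ the quadratic becomes $x^2+x+v^{-1}=0$, which by the additive Hilbert Theorem~90 has $2$ roots if $\Tr_{L/\Ft}(v^{-1})=0$ and none if $\Tr_{L/\Ft}(v^{-1})=1$. Since $\Tr_{L/\Ft}$ vanishes on an $\Ft$-subspace of index $2$, it is $0$ on $q/2-1$ elements of $\grmul L$ and $1$ on the other $q/2$, while $\Tr_{L/\Ft}(1)\equiv[L:\Ft]\pmod2$ and $q=2^{[L:\Ft]}\equiv(-1)^{[L:\Ft]}\pmod3$; hence $q\equiv2\pmod6$ exactly when $[L:\Ft]$ is odd and $q\equiv4\pmod6$ exactly when it is even. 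Feeding this into the displayed formula and keeping track of the extra $+2$ at $v=1$ gives the first and third rows of the table (including the multiplicity $2$ with frequency $q/2-2$, which for $q=4$ is genuinely empty).

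In odd characteristic, completing the square with $t=2x-1$ rewrites the quadratic as $t^2=(v-4)/v$, so $N(1,v)=2\,[\,v=1\,]+\#\{t\in L:\ t^2=(v-4)/v\}$ for $v\neq0$; this count is $1$ at $v=4$ and is $2$ or $0$ at $v\notin\{0,4\}$ according as $(v-4)/v$ is or is not a nonzero square, and $v\mapsto(v-4)/v$ is a bijection of $L\setminus\{0\}$ onto $L\setminus\{1\}$. If $p=3$ then $4=1$, so the $v=4$ and $v=1$ contributions merge into $N(1,1)=3$, and counting squares among $L\setminus\{0,1\}$ (of which $(q-3)/2$ are nonzero squares) yields the second row. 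If $p>3$ then $4\notin\{0,1\}$, so $N(1,4)=1$ is a genuine new value, and $N(1,1)=2+\#\{t:t^2=-3\}$, which is $4$ precisely when $-3$ is a square in $L$, equivalently (since then $\tfrac{-1+\sqrt{-3}}2$ is a primitive cube root of unity) precisely when $q\equiv1\pmod3$. Hence when $q\equiv5\pmod6$ the multiplicities are $\{0,1,2\}$ and counting squares gives the last row, while when $q\equiv1\pmod6$ the value $0$ occurs at $v=0$, the value $1$ at $v=4$, the value $4$ at $v=1$, and the value $2$ occurs somewhere (the relevant set of admissible square values of $(v-4)/v$ is nonempty because $q\geq7$), so $N(1,\cdot)$ is four-valued and $s$ is not nice. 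Combining all cases, $s=q-2$ is nice exactly when $q\not\equiv1\pmod6$.

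The work here is almost entirely bookkeeping; the one structural observation is that niceness fails for $q\equiv1\pmod6$ because the discriminant $-3$ attached to $v=1$ becomes a square and spawns a fourth differential multiplicity. The parts most prone to error are the exceptional points $v=0,1,4$: the merging $4=1$ in characteristic $3$, the shift of a root count when $\Tr_{L/\Ft}(1)=0$, and the extra $+2$ at $v=1$ from the boundary solutions $(0,1)$ and $(1,0)$ — together with getting the frequencies right by counting squares or trace values with exactly the correct elements excluded.
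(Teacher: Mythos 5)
Your proof is correct and follows essentially the same route as the paper's: reduce to the quadratic $x^2-x+1/v$ after peeling off the boundary solutions $(0,1)$ and $(1,0)$, then count roots via Hilbert 90 in characteristic $2$ and via the discriminant $1-4/v$ (equivalently your $(v-4)/v$) in odd characteristic, with the $q\equiv 1\pmod 6$ failure coming from $-3$ being a square. The only cosmetic differences are your explicit substitution $t=2x-1$ in place of the paper's direct appeal to the discriminant, and the paper's identification of $x^2-x+1$ as the sixth cyclotomic polynomial where you argue directly with $\tfrac{-1+\sqrt{-3}}{2}$.
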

\begin{proof}
The exponent $s$ is always invertible, and odd when $p$ is odd,
so that $x^s+y^s=0$ if and only if $y=-x$, which makes $N(1,0)=0$.
The pairs $(0,1)$ and $(1,0)$ are solutions of $x+y=1$, $x^s+y^s=1$, 
and for the other contributions, we may assume $vxy\not=0$, so that
$$
	x+y=1,\quad x^s+y^s=v
\qquad 
\Longleftrightarrow
\qquad \frac 1{xy}=v,\quad x+y=1.
$$
Thus $N(1,1)$ is two plus the number of roots in $L$ of $x^2-x+1$,
and $N(1,v)$ for $v\not=0,1$ is the number of roots in $L$ of $x^2-x+1/v$.
Since $x^2-x+1$ is the sixth cyclotomic polynomial in fields of
characteristic $p> 3$ (while it degenerates to the third cyclotomic
polynomial in characteristic $2$ and to $(x+1)^2$ in characteristic $3$),
we see that
\[
N(1,1) =\begin{cases}
2 & \text{if $q \equiv 2 \pmod{3}$,} \\
3 & \text{if $q \equiv 0 \pmod{3}$,} \\
4 & \text{if $q \equiv 1 \pmod{3}$.}
\end{cases}
\]
If $v\not=0,1$ and we are in characteristic $2$, then the additive Hilbert Theorem 90 shows that $x^2-x+1/v$ has zero or two roots in $L$ depending on whether the absolute trace (from $L$ to $\Fp$) of $1/v$ is $1$ or $0$, respectively.
So if $[L:\F_2]$ is odd ($q\equiv 2\pmod{6}$), we obtain $q/2-1$ instances each of the differential multiplicities $0$ and $2$, while if $[L:\F_2]$ is even ($q \equiv 4 \pmod{6}$) we obtain $q/2$ instances of $0$ and $q/2-2$ instances of $2$.

If $v\not=0,1$ and we are in odd characteristic, the quadratic polynomial $x^2-x+1/v$ has zero, one, or two roots in $L$ depending on whether the discriminant $1-4/v$ is a quadratic nonresidue, zero, or a quadratic residue in $L$, respectively.
Our discriminant runs through all values except $1$ and $-3$.
In characteristic $3$ ($q\equiv 3\pmod{6}$), this produces $(q-1)/2$ instances differential multiplicity $0$ and $(q-3)/2$ instances of differential multiplicity $2$.
When $-3$ is a quadratic residue (i.e., when $q\equiv 1 \pmod{6}$), this produces one instance of differential multiplicity $1$, $(q-1)/2$ instances differential multiplicity $0$, and $(q-5)/2$ instances of differential multiplicity $2$.
When $-3$ is a quadratic nonresidue (i.e., when $q\equiv 5 \pmod{6}$), this produces one instance of differential multiplicity $1$, and $(q-3)/2$ instances each differential multiplicities $0$ and $2$.

We collate the information about the various $N(1,v)$ to see that we get a nice exponent if and only if $q\not\equiv 1 \pmod{6}$.
\end{proof}

It is easy to find numerically all of the differential
multiplicities for exponents over a small field 
using the Zech logarithm.   Here, we focus on odd characteristic.
Let $\omega$ be a
primitive root of the finite field $L$ of order $q$.  Then for $k\not=(q-1)/2$,  we define $\zech(k)$ to be the unique $\ell$ such that $\omega^\ell=1+\omega^k$.
The logarithm of
$$
       x^s + (1-x)^s = x^s \left( 1 + \left(\frac{1-x}x\right) ^s \right)
$$
for $x= \omega^k$ is
\[
  k \times s + \zech[ s \times ( \zech[ n + k] - k ) ].
\]
where $n = (q-1)/2$.  
In the numerical experiment \cite{NICEXP}, 
all nice exponents over fields of characteristic $2 \leq p \leq 31$ 
and order $q\leq 2^{20}$ were found.
Nice exponents $s$ with $s\not\approx 1$ occur in the majority of these fields. For example,
Table \ref{NICE} shows the nice exponents ($\not\approx 1$) 
up to the equivalence $\approx$ for the fields of order $11^m$ with $m \leq 5$. 
The sixth line of the table indicates
that $241$ is nice over $\F_{11^4}$, congruent to $1$ modulo $10$, with 
three differential multiplicities $0$, $2$, and $121$. The $s^\dag$ indicates
that $s\approx q-2$.

\begin{table}[ht!]
\caption{Nice exponents $s\not\approx 1$ over the fields $\F_{11^m}$
\label{NICE}} 
\begin{tabular}{crcrrr}
\hline
degree &$s$  &$s\pmod{p-1}$& \multicolumn{3}{r}{differential multiplicities}\\
$m$ & & &\multicolumn{3}{r}{frequency [value]}\\
\hline
\hline
1    &3  &3  &5 [0]  & 1 [1] &  5 [2]\\
     &$9^\dag$  &9  &5 [0]  & 1 [1] &  5 [2]\\
2\\
3  &3    &3 &665 [0]  & 1 [1] &665 [2]\\
   &$1209^\dag$ &9 &665 [0]  & 1 [1] &665 [2]\\
4
   &241  &1 &7380 [0] &7260 [2] &  1 [121]\\
5  
     &3  &3 &80525 [0] &  1 [1] &80525 [2]\\
&$146409^\dag$  &9 &80525 [0] &  1 [1] &80525 [2]\\
\hline
\end{tabular}
\end{table}
As one can see in this example, the nice exponents $s\not\approx 1$
are rare, and all have differential multiplicity $2$.  
Surprisingly, this seems to be a general fact.  
Indeed, for the characteristics $3$ to $31$, 
all the observed nice exponents $s\not\approx 1$ have $2$ as a 
differential multiplicity. 

\begin{problem}
All nice exponents $s\not\approx 1$ over the prime fields of characteristic
$p < 5000$ were found in the numerical experiment \cite{NICEXP}, and they all correspond to the paradigmatic 
examples of Propositions \ref{PARADIG} and \ref{MINUSONE}.  Is this a general property? 
\end{problem}

We propose some conjectures based on our 
numerical evidence.

\begin{conjecture}[nice exponent]
\label{NC}
Let $s\not\approx 1$ be an exponent over a finite field 
of odd characteristic.  If $s$ is
nice, then $2$ is a differential multiplicity.
\end{conjecture}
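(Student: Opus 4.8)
We outline a possible approach to Conjecture \ref{NC}. Fix an invertible exponent $s\not\approx 1$ over $L$ of odd order $q$, and set $g(x)=x^{s}+(1-x)^{s}$, so that $N(1,v)=\card{g^{-1}(v)}$. Since $\gcd(s,q-1)=1$ forces $s$ odd, the top-degree terms of $g$ cancel, and $g$ is a polynomial of degree at most $s-1$, necessarily even; moreover $g$ is nonconstant, since a constant $g$ would make $x^{s}+(1-x)^{s}-c$ vanish on all of $L$ while having degree less than $q$, forcing it to be the zero polynomial and hence (inspecting coefficients) $s=p^{a}$, i.e.\ $s\sim 1$. The first step is a parity argument: because $p$ is odd, the involution $\iota\colon x\mapsto 1-x$ acts on $L$ with the single fixed point $x=2^{-1}$ and satisfies $g\circ\iota=g$ with $g(2^{-1})=2^{1-s}$, so $N(1,v)$ is even for every $v\neq 2^{1-s}$ and odd for $v=2^{1-s}$. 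Combining this with $N(1,0)=0$ (for $s$ odd, $x^{s}+y^{s}=0$ forces $y=-x$, incompatible with $x+y=1$) and with niceness, the value set of $N(1,\cdot)$ is forced to be \emph{exactly} $\{0,\Delta_{e},\Delta_{o}\}$, where $\Delta_{o}\geq 1$ is odd and attained only at $v=2^{1-s}$, while $\Delta_{e}\geq 2$ is even and attained $n_{e}\geq 1$ times; the possibility that $\Delta_{e}$ is absent is excluded because it would force $g\equiv 2^{1-s}$. \textbf{Thus Conjecture \ref{NC} is equivalent to the single assertion $\Delta_{e}=2$.}

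Next I would record the elementary constraints. Counting the solutions of $x+y=1$ gives $n_{e}\Delta_{e}+\Delta_{o}=q$, where $n_{e}=\card{g(L)}-1$. In the generic case $2^{1-s}\neq 1$, the distinct points $x=0$ and $x=1$ both lie in $g^{-1}(1)$, so $N(1,1)$ is a positive even value and hence $N(1,1)=\Delta_{e}$; the third-moment computation \eqref{Elaine} (which uses only that $s$ is odd) then identifies $\Delta_{e}$ with the normalized cubic moment of the Weil spectrum, $\Delta_{e}=q^{-2}\sum_{w\in L}\Weil{L}{s}(w)^{3}$. In particular, if $\Delta_{e}\geq 4$ then $n_{e}\leq (q-1)/4$, so $g$ has a \emph{small value set}: $\card{g(L)}\leq (q+3)/4$.

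To exploit smallness of the value set, I would pass to the quotient by $\iota$. The $\iota$-invariants in $L[x]$ are exactly $L[u]$ with $u(x)=x-x^{2}$, so we may write $g=G\circ u$ with $\deg G=\tfrac12\deg g\leq (s-1)/2$; then $g(L)=G(T)$, where $T=u(L)=\{w:1-4w\text{ is a square in }L\}$ has size $(q+1)/2$ and $\card{u^{-1}(w)}\in\{0,1,2\}$. The target $\Delta_{e}=2$ says precisely that $G$ is injective on $T$ away from the single fibre over $2^{1-s}$, whereas a counterexample would have $\card{G(T)}\leq\tfrac12\card T$. The plan is to combine the classification theory of polynomials with small value sets (going back to Gomez-Calderon and others) with the explicit shape of $g$ and of $T$ to force $g$, and hence $s$, into a short explicit list, and then to verify that this list consists only of the paradigmatic exponents $s=3$ and $s=q-2$ of Propositions \ref{PARADIG} and \ref{MINUSONE}, all of which do have $\Delta_{e}=2$.

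The hard part will be precisely this last step, and it is why the conjecture is still open. The exponent $s$ may be as large as $q-2$, so $\deg g$ is unbounded relative to $q$, whereas the sharp small-value-set classifications are cleanest when the degree is small compared to $q$; I expect one will need an additional input tailored to the form $x^{s}+(1-x)^{s}$ — for instance a Weil-type bound for the mixed character sums $\sum_{w}\chi(1-4w)\,\mu\big(aG(w)\big)$ (with $\chi$ the quadratic character of $L$) that arise when $N(1,v)$ is expanded through $u$, or the cyclotomic-Galois machinery that Katz used for the three-valued problem, or an argument that leverages the invariance of the differential multiplicities under $s\mapsto 1/s$. A secondary but genuine complication is the edge case $2^{1-s}=1$ (equivalently $\mathrm{ord}_{\grmul L}(2)\mid s-1$): the structural conclusion ``value set $=\{0,\Delta_{e},\Delta_{o}\}$'' still holds, but there $N(1,1)=\Delta_{o}$ rather than $\Delta_{e}$, so the identification of $\Delta_{e}$ with a cubic power moment is lost and $\Delta_{e}$ must be controlled by other means. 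Even the restricted statement — that over prime fields of characteristic below $5000$ the only nice exponents $\not\approx 1$ are the paradigmatic ones — is at present only verified numerically.
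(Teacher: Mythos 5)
The statement you are addressing is Conjecture \ref{NC}, which the paper does not prove: it is posed as an open conjecture, supported only by the numerical evidence of \cite{NICEXP} (fields of characteristic $p\leq 31$ and order $q\leq 2^{20}$, prime fields with $p<5000$) and by the observation that it would imply the Helleseth Three-Valued Conjecture. So there is no proof in the paper to compare against, and --- as you say yourself in your final paragraph --- your text is not a proof either but a research program. That said, the reductions you do carry out are correct and nontrivial. The parity argument via the involution $x\mapsto 1-x$ (legitimate since $p$ is odd, with unique fixed point $1/2$) does show that $N(1,v)$ is even for $v\neq 2^{1-s}$ and odd for $v=2^{1-s}$; combined with $N(1,0)=0$ and the fact that $g$ is nonconstant for $s\not\approx 1$, this correctly forces the value set of $N(1,\cdot)$ to be exactly $\{0,\Delta_o,\Delta_e\}$ with $\Delta_o$ odd and $\Delta_e\geq 2$ even, so the conjecture is indeed equivalent to $\Delta_e=2$. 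The identification $\Delta_e=N(1,1)=q^{-2}\sum_{w}\tfr f(w)^3$ when $2^{1-s}\neq 1$ is a correct application of \eqref{Elaine} (which, as you note, needs only that $s$ is odd and invertible, not that the spectrum is three-valued), and your edge case is real: in characteristic $3$ with $s=q-2$ one has $2^{1-s}=1$ and $N(1,1)=3$ is the odd value, consistent with Proposition \ref{MINUSONE}.

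The genuine gap is the entire final step, and it is worth naming precisely why your proposed tools will not close it. A counterexample would give $\card{g(L)}\leq (q+3)/4$, i.e.\ $G$ fails to be injective on roughly half of the set $T$ of size $(q+1)/2$; but the small-value-set classifications you cite apply to polynomials whose value set is close to the minimum possible size $\lceil q/\deg\rceil$, or require the degree to be small compared to $q$ (on the order of $q^{1/4}$ for the Weil-type arguments), whereas $\deg g$ here can be as large as $q-3$, so the bound $\card{g(L)}\leq (q+3)/4$ carries no information relative to $\deg g$. Worse, $G$ need only be non-injective on the proper subset $T\subset L$, which places the problem outside the scope of value-set theorems for $G$ on all of $L$; the mixed character sums $\sum_w\chi(1-4w)\mu\bigl(aG(w)\bigr)$ you would need are exactly the sums for which no adequate bound is known at these degrees. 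So your argument buys a clean reformulation ($\Delta_e=2$, equivalently $G$ injective on $T$ off a single fibre) and confirms the paradigmatic examples, but supplies no leverage on the reformulated statement; the conjecture is as open after your reduction as before it.
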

We can make an even stronger conjecture.
\begin{conjecture}[optimist]
Let $s\not\approx 1$ be an exponent over a finite field 
of odd characteristic. If $s$ is invertible,
then $2$ is a differential multiplicity.
\end{conjecture}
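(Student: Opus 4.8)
We sketch a possible line of attack on this conjecture. Write $g_s(x)=x^s+(1-x)^s$, so that $N(1,v)=\card{\{x\in L:g_s(x)=v\}}$. The involution $\iota\colon x\mapsto 1-x$ of $L$ satisfies $g_s\circ\iota=g_s$ and, the characteristic being odd, has $x=1/2$ as its only fixed point; hence $\iota$ acts freely on each fibre $g_s^{-1}(v)$ unless $v=v_0:=g_s(1/2)=2^{1-s}$, so that $N(1,v)$ is even for $v\ne v_0$ and odd for $v=v_0$. Since $g_s(0)=g_s(1)=1$ we have $N(1,1)\ge 2$, and $N(1,1)$ is even when $1\ne v_0$; thus one already wins if $N(1,1)=2$, while in general the task is to exhibit one value $v\ne v_0$ whose fibre is a single $\iota$-orbit, i.e.\ with $N(1,v)=2$. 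Quotienting by $\iota$, the defining identity $A^s+B^s=D_s(A+B,AB)$ of the Dickson polynomials gives $g_s(x)=G_s\bigl(x(1-x)\bigr)$, where $G_s(u)=D_s(1,u)$ is a polynomial of degree $(s-1)/2$ that frequently coincides on $L$ with a rational function of much smaller degree. A point $u\in L$ has two preimages under $x\mapsto x(1-x)$ when $1-4u$ is a nonzero square, one when $u=1/4$, and none otherwise; so, setting $S=\{u\in L:1-4u\text{ is a nonzero square}\}$, the statement to be proved becomes: \emph{$G_s$ restricted to $S$ has a fibre of size exactly one}.

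Two reductions come for free. First, the multiset $\{N(1,v)\}_{v\in L}$ is invariant under the equivalence $\approx$: replacing $s$ by $ps$ composes $g_s$ with the Frobenius, while the substitution $a=x^s$, $b=y^s$ in the defining system yields $N^{(s)}(u,v)=N^{(1/s)}(v,u)$, and these together with $N(u,v)=N(1,v/u^s)$ and $N(1,0)=0$ give the claim. So we may choose for $s$ the representative of its $\approx$-class for which $G_s$ coincides on $L$ with a rational function of least degree: if that degree is $1$, this function is injective on $S$ and we are done (covering $s\approx 3$ and $s\approx q-2$, settled directly in Propositions~\ref{PARADIG} and \ref{MINUSONE}), while if it is $2$, a Weil estimate for the character sum counting the pairs $u,u'\in S$ with $G_s(u)=G_s(u')$ again produces the required fibre. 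Second, recall from the derivation of \eqref{CV} that $\sum_{w}\tfr f(w)^4=q^2\sum_{v}N(1,v)^2$ for every power permutation with $s$ odd, and that $\sum_{v}N(1,v)^2\ge 2q-1$ always, the difference counting the ``nontrivial collisions'', i.e.\ the pairs $(x,x')$ with $g_s(x)=g_s(x')$ and $x'\notin\{x,1-x\}$. If this difference vanishes, i.e.\ $\sum_{w}\tfr f(w)^4=q^2(2q-1)$, then every value attained by $g_s$ other than $v_0$ is hit by a single $\iota$-orbit; and $s\not\approx 1$ makes $g_s$ nonconstant (otherwise $x^s+(1-x)^s$ would be the constant polynomial $1$, forcing $s$ to be a power of $p$, hence $s\approx 1$), so such a value exists and has $N(1,v)=2$.

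The remaining, and principal, case is an invertible $s\not\approx 1$ for which $G_s$ has intermediate effective degree and $g_s$ has genuine nontrivial collisions. Here one should combine the classical analysis of the value distribution of the Dickson polynomial $G_s$ over $L$ with the behaviour of the quadratic character $\chi(1-4u)$ on its fibres: the sizes of the fibres of $G_s$, and the way they meet $S$, are governed by the conjugacy class of Frobenius in the Galois group of the cover $u\mapsto v=G_s(u)$ over $\overline{L}(v)$, refined by the index-two subextension cut out by $\chi(1-4u)$; a Chebotarev-type argument should then locate a $v$ whose fibre meets $S$ in exactly one point, unless that Galois group is so small as to force $s\approx 1$. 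For the weaker Nice Exponent Conjecture (Conjecture~\ref{NC}), where the differential spectrum is $\{0,\delta,d\}$ with $\delta=N(1,v_0)$ the unique odd value (attained only at $v_0$) and $d$ even, one may instead try to force $d=2$ directly from the power-moment identities $\sum_{v}N(1,v)=q$, $\sum_{w}\tfr f(w)^3=q^2N(1,1)$, $\sum_{w}\tfr f(w)^4=q^2\sum_{v}N(1,v)^2$ of Section~\ref{Joseph}, together with the divisibility information of \eqref{Peter} and Theorem~\ref{UNIFORM}.

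The crux is uniformity over all invertible $s\not\approx 1$. In odd characteristic the hypothesis $\gcd(s,q-1)=1$ already rules out several familiar low-degree families---$s=p^k+1$, for instance, is always even---so the hard exponents are of a rather different flavour than in characteristic two (Niho- and Welch-type exponents and their relatives), with $G_s$ of intermediate effective degree. For these the generic Weil bound on $\tfr f(w)$ is too weak to control $\sum_{w}\tfr f(w)^4$, and the Galois-theoretic analysis must be carried out by hand while tracking how it interacts with $\chi(1-4u)$; this is essentially the obstruction that has kept the Three-Valued and Vanishing Conjectures (Conjectures~\ref{HSC} and \ref{HVC}) open. The realistic programme is therefore to settle the degree-$\le 2$ and no-collision cases unconditionally, then to establish Conjecture~\ref{NC} via the moment argument, and only afterwards to attack the optimist conjecture in full.
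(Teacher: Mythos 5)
This statement is one of the paper's conjectures, not a theorem: it is proposed on the strength of the numerical experiment \cite{NICEXP} and is open, so there is no proof in the paper to measure yours against. Your proposal is, by its own admission, a programme rather than a proof, and its central case is genuinely unresolved. The preliminary reductions are correct and worth recording: the involution $x\mapsto 1-x$ does force $N(1,v)$ to be even for every $v\ne 2^{1-s}$; the identity $x^s+(1-x)^s=D_s(1,x(1-x))$ and the invariance of the multiset $\{N(1,v)\}_{v}$ under $s\mapsto ps$ and $s\mapsto 1/s$ are right; the degree-one cases recover Propositions \ref{PARADIG} and \ref{MINUSONE}; and the observation that $x^s+(1-x)^s\equiv 1$ on $L$ forces $x\mapsto x^s$ to be additive, hence a field automorphism, hence $s\approx 1$, correctly disposes of the collision-free case.

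The entire difficulty of the conjecture, however, lives in the case you defer to ``a Chebotarev-type argument'': an invertible $s\not\approx 1$ for which $G_s$ admits no low-degree rational model. There the proposed argument fails quantitatively, not merely for lack of detail. A Chebotarev or Weil-type count of the fibres of $u\mapsto G_s(u)$ refined by the quadratic-residue condition on $1-4u$ carries an error term proportional to the degree of the relevant cover times $\sqrt q$, while $\deg G_s=(s-1)/2$ is, for a generic invertible exponent, comparable to $q$; the estimate therefore says nothing except in the low-degree regime you have already handled. Moreover, the ``classical value distribution of Dickson polynomials'' concerns $x\mapsto D_s(x,a)$ in its \emph{first} argument, whose geometric monodromy is dihedral; your $G_s(u)=D_s(1,u)$ is a polynomial in the \emph{second} argument, for which no such uniform structure theory is available. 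Nothing in the proposal exhibits a fibre of size exactly two for a general invertible $s\not\approx 1$ in odd characteristic, so the conjecture remains exactly as open as the paper leaves it; what you have is a correct reduction of the problem to the statement that $G_s$ restricted to $\{u: 1-4u \text{ a nonzero square}\}$ has a singleton fibre, together with a verification of the easy boundary cases.
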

Then we claim that we have the following implications among conjectures
$$
\text{optimist}
\Longrightarrow
\text{nice exponent}
\Longrightarrow
\text{Helleseth Three-Valued}.
$$

\begin{remark}
It is interesting to notice that Conjecture \ref{NC} implies the
Helleseth Three-Valued Conjecture.
Indeed, let $s$ be an exponent over a field $L$ of characteristic $p > 3$ with $[L:\F_p]$ a power of two, and suppose that $s$ is three-valued with values $0$, $A$, and $B$.
Write $A=\alpha p^a$, $B=\beta p^b$, and $A-B=\gamma p^c$ with $p\nmid \alpha,\beta,\gamma$.
Now assume Conjecture \ref{NC} holds, so that $2$ is a differential multiplicity of $s$.
Since we must be in case \eqref{Bob} of Theorem \ref{UNIFORM}, this shows that $|\alpha\beta|=2$ and $a=b=\frac{1}{2}[L:\F_p]$.
Since the nonzero spectral values $A$ and $B$ are opposites in sign (as was seen in our proof of Theorem \ref{UNIFORM}), this means that $|A-B|=3 \sqrt{|L|}$, but since $p>3$, this contradicts the fact that $|\gamma|=1$, which also must hold in case \eqref{Bob} of Theorem \ref{UNIFORM}.
Thus, if Conjecture \ref{NC} is true, then Helleseth's Conjecture will be true in all characteristics $p > 3$.
Since Helleseth's Conjecture is already proved in characteristic $2$ and $3$ (see Corollary \ref{Richard}), it would then be fully established.
\end{remark}
\section*{Acknowledgements}
The first author was supported in part by a Research, Scholarship, and Creative Activity Award from California State University, Northridge.
The first author was also supported in part by the Institut de Math\'ematiques de Toulon at Universit\'e de Toulon as a visiting professor.
The authors thank anonymous reviewers for helpful corrections and suggestions.

\end{document}